%
%
%
%
%

\RequirePackage{fix-cm}

\documentclass[smallextended]{svjour3}       
\smartqed  
\usepackage{graphicx}
\usepackage{amssymb}
\usepackage{amsmath}
\usepackage{stmaryrd}
\usepackage{graphicx}
\usepackage{placeins}
\usepackage{color}
\usepackage{upgreek}
\usepackage{pgfplots}
\usepackage{tikz}
\usetikzlibrary{shapes,arrows,calc,external}
\usepgfplotslibrary{external}




\def\ispace{j}
\def\imaxspace{J}

\def\itime{n}
\def\imaxtime{N}

\def\ifourier{k}


\def\lmax{\lambda_{\mathrm{max}}}
\def\Ahat{\widehat{A}}
\def\Atilde{\widetilde{A}}

\def\cflhat{\widehat{\nu}}

\def\cflmax{\nu_{\mathrm{max}}}

\def\cfli{\nu_1}
\def\cflii{\nu_2}

\def\eigAk{\mu}
\newcommand{\ww}{\underline w}

\newcommand{\dx}{\Delta x}
\newcommand{\dt}{\Delta t}
\def\dtdx{\frac \dt \dx}

\newcommand{\eps}{\varepsilon}
\newcommand{\HHn}{\widehat{\mathcal{H}}}
\newcommand{\HHs}{\widetilde{\mathcal{H}}}
\newcommand{\IMEX}{\mathcal{I}_\ispace^\itime}
\newcommand{\OO}{\Omega} 
\newcommand{\Aa}{\mathcal{A}_\ifourier}
\newcommand{\Nx}{\imaxspace}

\DeclareMathOperator{\Real}{Real}
\DeclareMathOperator{\diag}{diag}
\newcommand{\vecd}[2]{\left( \begin{matrix} #1 \\ \vdots \\  #2\end{matrix}\right)}
\DeclareMathOperator{\Id}{Id}
\DeclareMathOperator{\cfl}{CFL} 
\DeclareMathOperator{\CFL}{CFL} 

\DeclareMathOperator{\R}{\mathbb{R}}

\DeclareMathOperator{\Z}{\mathbb{Z}}

\newcommand{\vel}{\upupsilon}

%
%
%
%
%
\begin{document}

\title{Flux Splitting for stiff equations: A notion on stability}


\author{Jochen Sch\"utz \and Sebastian Noelle}

\institute{J. Sch\"utz and S. Noelle \at
           Institut f\"ur Geometrie und Praktische Mathematik, RWTH Aachen University \\ Templergraben 55, 52062 Aachen \\
              Tel.: +49 241 80 97677 \\
              \email{\{schuetz,noelle\}@igpm.rwth-aachen.de}           
}

\date{Received: date / Accepted: date}

\maketitle

\begin{abstract}
For low Mach number flows, there is a strong recent interest in the development and analysis of \emph{IMEX} (implicit/explicit) schemes, which rely on a splitting of the convective flux into stiff and nonstiff parts. A key ingredient of the analysis is the so-called \emph{Asymptotic Preserving} (AP) property, which guarantees uniform consistency and stability as the Mach number goes to zero. While many authors have focussed on asymptotic consistency, we study asymptotic stability in this paper: does an IMEX scheme allow for a CFL number which is independent of the Mach number? We derive a stability criterion for a general linear hyperbolic system. 
In the decisive eigenvalue analysis, the advective term, the upwind diffusion and a quadratic term stemming from the truncation in time all interact in a subtle way. 
As an application, we show that a new class of splittings based on characteristic decomposition, for which the commutator vanishes, avoids the deterioration of the time step which has sometimes been observed in the 
literature.
\end{abstract}

\underline{Keywords:}   IMEX Finite Volume, Asymptotic Preserving, Flux Splitting, Modified Equation, Stability Analysis

\underline{AMS subject classification:} 
  35L65, 76M45, 65M08

\section{Introduction, Underlying Equations And Flux Splitting}

In recent years there has been a renewed interest in the computation of singularly perturbed differential equations. These equations arise, e.g., in the simulation of low-speed fluid flows. Here one is interested in computing waves with vastly different speeds. The goal is to resolve slow waves accurately and efficiently with a large time step, while approximating the fast waves in a stable way, using the same time step.

There is vast literature on the computation of low-speed viscous and inviscid fluid flows. Arguably the first contribution within this field is Chorin's algorithm \cite{Chorin}, who proposes to solve the incompressible Navier-Stokes equations using a projection method. Similar methods have also been used in, e.g., \cite{ColellaPao}.  A different approach to reduce the stiffness occurring at low Mach numbers is to introduce preconditioning, i.e., to multiply the temporal derivative with a suitable matrix, see the pioneering work by Turkel \cite{TurkelPreconditioning}, and, built on this result, the works by Guillard et al. \cite{Guillard2,Guillard1,Guillard3}. In \cite{Guillard1}, the author identifies the main problem in approximating low-speed flows: Roughly speaking, the variation of pressure is of second order in the Mach number $Ma$. However, 'traditional' Godunov schemes tend to produce pressure variations that are of first order in the Mach number, thus for $Ma \rightarrow 0$, there can be no 
uniform 
convergence. For an extensive additional analysis, in particular with respect to suitable initial conditions, we refer to Dellacherie \cite{Dellacherie}. 
We do not intend to give a fully exhaustive overview on this topic. For an overview on the treatment of low-speed flows, we refer to \cite{ChoiMerkle} and the references therein; a more recent survey was given in \cite{surveyAsymptotic}.

A class of algorithms that has found particular attention are the \emph{Asymptotic Preserving} schemes introduced by Jin \cite{Jin99}, built on work with Pareschi and Toscani \cite{Jin1998}. For an excellent review article, consult \cite{Jin2012}; we refer to
\cite{ArNo12,CoDeKu12,DegLoNaNe12,DeTa,HaJiLi12,ArNoLuMu12,JsAP13} for various applications of this method in different contexts.

Many algorithms, especially those used within the \emph{Asymptotic Preserving} schemes rely on
identifying \emph{stiff} and \emph{nonstiff} parts of the underlying equation. This point is generally considered crucial, and the hope is that a well-chosen splitting guarantees a good behavior of the algorithm. A splitting is usually obtained by physical reasoning, see, e.g., the fundamental work by Klein \cite{Kl95}.

Having obtained a splitting into stiff and nonstiff parts, the
nonstiff part is then treated explicitly, and the stiff one
implicitly. This procedure naturally leads to so-called IMEX
schemes as introduced in \cite{Ascher1997}. We refer to \cite{Bo07,RuBosc12} for an interesting discussion on the quality of these
schemes in the asymptotic limit.

As far as the authors can see, a fully nonlinear asymptotic stability analysis for the non-isentropic Euler equations is still out of reach. In this work, we attempt to reveal an important structural stability property of flux splittings via the considerably simpler \emph{modified equation} analysis \cite{WaHy74} for a prototype, $3\times3$ linear system of conservation laws.

More specifically we derive the modified parabolic system of equations of second order
and investigate under what conditions its solutions are bounded in the  $L^2$-norm.
For simple problems, one can investigate these conditions analytically. This approach is closely related to
the classical \emph{von-Neumann} stability analysis (see, e.g., \cite{Lax1961,Richtmyer1957}). Strang \cite{str64} showed that, under some assumptions, it is enough to consider only
linearized problems, so the approach used in this work is actually
more general than it seems at first sight.

Throughout the paper, we consider the {linear} hyperbolic system of conservation laws
 \begin{subequations}
 \label{eq:underlyingequation}
\begin{alignat}{2}
  u_t + A u_x &= 0 &\quad& \forall (x, t) \in \OO \times (0, T) \\
  u(x, 0) & = u_0(x) &\quad& \forall x \in \OO
\end{alignat}
 \end{subequations}
with a constant matrix $A \in \R^{d \times d}$ that has $d$
distinct eigenvalues and a full set of corresponding eigenvectors.
For simplicity, we set $\OO := [0,1]$ and consider smooth periodic
solutions $u$ with initial data $u_0$. Furthermore, we assume that the matrix $A$ is a function of a parameter $\eps \in (0, 1]$ such that with $\varepsilon
\rightarrow 0$, some eigenvalues of $A$ diverge towards infinity.

The motivation to consider \eqref{eq:underlyingequation} stems
from considering \emph{linearized} versions of classical systems
of conservation laws
\begin{align}
 \label{eq:conslaw}
 v_t + f(v)_x &= 0,
\end{align}
e.g., the (non-dimensionalized) Euler equations at low Mach number $\eps$, with characteristic quantities density, momentum and total energy,  $v = (\rho, \rho \vel, E)^T$, and
\begin{align}
 \label{eq:f_euler}
 f(\rho, \rho \vel, E) := \left(\rho \vel, \rho \vel^2 + \frac{p}{\eps^2}, \vel(E + p)\right)^T.
\end{align}
Its linearization around a state $(\rho, \rho \vel, E)$ yields a matrix $A$ with eigenvalues 
  \begin{align*}
   \lambda = \vel, \vel \pm \frac{c}{\eps}, 
  \end{align*}
  where $c := \sqrt{\frac{\gamma p}{\rho}}$ denotes the speed of sound. $\gamma$ is the ratio of specific heats, frequently taken to be $\gamma = 1.4$ for air. 
Note that two eigenvalues tend to infinity as $\eps \rightarrow 0$.

To highlight the difficulties posed by eigenvalues of multiple
scales we briefly discuss a standard, explicit finite volume scheme for \eqref{eq:conslaw}
\begin{alignat*}{2}
 \frac{v_j^{\itime+1}-v_j^\itime}{\dt} +
 \frac{\HHn_{j+\frac12}^\itime - \HHn_{j-\frac12}^\itime}{\dx} = 0
\end{alignat*}
with consistent numerical flux $\HHn_{j+\frac12}^\itime$.
From the stability conditions by Courant, Friedrichs and Lewy \cite{Courant1928}, it is known that explicit schemes are only stable under a $\cfl$ condition, which is typically given by 
\begin{align}
 \label{eq:cflmax}
 \cflmax := \lmax \dtdx = \frac{(\vel + \frac{c}{\eps}) \dt}{\dx}< 1. 
\end{align}
In the limit as $\eps \rightarrow 0$, one mainly wants to resolve the advective wave traveling with speed $\vel$. Given the restrictive $\cfl$ condition \eqref{eq:cflmax}, this would imply that one needs ${\mathcal O}(\eps^{-1})$ steps to advect a signal across a single grid cell. For small $\eps$, this is prohibitively inefficient, and for many schemes also prohibitively dissipative. However, using 
\begin{align}
 \label{eq:cfladv}
 \widehat \nu := \vel \dtdx < 1 
\end{align}
as \emph{advective} $\cfl$ condition would result in an unstable scheme. 

One potential remedy is to use fully implicit or mixed implicit /
explicit (IMEX) methods. The latter class of methods requires a
splitting of the flux $f$ into components with 'slow' and 'fast' waves.
More precisely, in the context of \eqref{eq:underlyingequation}, one seeks matrices $\widehat A$ and $\widetilde
A$, such that
\begin{align}
 \label{eq:splitting}
 A =  \widehat A + \widetilde A,
\end{align}
with the following conditions posed on $\widehat A$ and $\widetilde A$:
\begin{definition}\label{def:splitting}
 The splitting \eqref{eq:splitting} is called \emph{admissible}, if
 \begin{itemize}
  \item both $\widehat A$ and $\widetilde A$ induce a hyperbolic system, i.e.,
   they have real eigenvalues and a complete set of eigenvectors.
  \item the eigenvalues of $\widehat A$ are bounded independently of $\eps$.
 \end{itemize}

\end{definition}

In this work, we give a recipe for identifying stable classes of flux splittings for the linear hyperbolic system \eqref{eq:underlyingequation}. We use the well-known modified equation analysis as a tool for (heuristically) investigating (linear) $L^2$-stability.

The paper is outlined as follows: In Section 2, we introduce the lowest-order IMEX scheme, while in Section 3, we investigate this scheme using the modified equation approach. 
In Section 4, we introduce so-called characteristic splittings, which are a basic ingredient for a uniformly stable scheme. 
Based on those sections, we show our main result in Section 5, Theorem \ref{thm:charsplittingstable}: Characteristic splittings are stable in the sense as explained in Section 3 with a time step size \emph{independently} of $\eps$. 
Section 6 shows an example of a scheme that is only stable with a time step size decreasing with $\eps$, i.e., the allowable $\dt$ such that the scheme is stable behaves as $\dt \propto \eps$, which, for small $\eps$ is obviously not the desired effect. 
In Section 7, we apply our analysis to the linearized Euler equations and show some numerical results that substantiate the theory developed in this paper. 
Finally, Section 8 offers conclusions and possible future work.


\section{IMEX Discretization}

Based on a splitting as given in \eqref{eq:splitting}, we
introduce a straightforward first-order IMEX discretization for \eqref{eq:underlyingequation} based
on nonstiff and stiff numerical fluxes $\HHn$ and $\HHs$. We
assume that the temporal domain is subdivided as
\begin{align*}
 0 = t^0 < t^1 < \ldots < t^{\imaxtime} = T
\end{align*}
with constant spacing $\dt := t^{\itime+1}-t^\itime$; and that we have a subdivision
\begin{align*}
 \OO := \bigcup_{\ispace=0}^{\imaxspace} [x_{\ispace-\frac12}, x_{\ispace+\frac12}]
\end{align*}
also with constant spacing $\dx := x_{\ispace+\frac12} - x_{\ispace-\frac12}$
and cell midpoints $x_\ispace$.
As is customary, we denote a numerical approximation to $u(x_\ispace,
t^\itime)$ by $u_\ispace^\itime$. Furthermore, the vector
$(u_0^\itime, \ldots,  u_{\imaxspace}^{\itime})$ is denoted by
$\vec{U}^\itime$.
Now we can introduce a (classical) first-order IMEX scheme:
\begin{definition}
A sequence $\vec{U} = (\vec{U}^0, \ldots, \vec{U}^{\imaxtime})$ is a solution to an IMEX discretization, given that
\begin{align*}
 \quad \IMEX(\vec{U}) = 0, \forall j \in \{0, \ldots, J\}, \ \forall n \in \{0, \ldots, N-1\}, 
\end{align*}
where
\begin{alignat}{2}
 \label{eq:imex}
 \IMEX(\vec{U}) := \frac{u_j^{\itime+1}-u_j^\itime}{\dt}
  + \frac{\HHn_{j+\frac12}^\itime - \HHn_{j-\frac12}^\itime}{\dx} + \frac{\HHs_{j+\frac12}^{\itime+1} - \HHs_{j-\frac12}^{\itime+1}}{\dx}.
\end{alignat}
Here, nonstiff and stiff numerical fluxes are defined by
\begin{alignat}{2}
 \label{eq:HHn}
  \HHn_{j+\frac12}^\itime     & := \frac12 \widehat   A \left( u^\itime_{j+1}+u^\itime_j \right)         &-& \frac{\widehat \alpha}{2} \left(u^\itime_{j+1}-u^\itime_j \right)
 \\ \label{eq:HHs}
  \HHs_{j+\frac12}^{\itime+1} & := \frac12 \widetilde A \left( u^{\itime+1}_{j+1}+u^{\itime+1}_j \right) &-& \frac{\widetilde \alpha}{2} \left(u^{\itime+1}_{j+1}-u^{\itime+1}_j \right),
\end{alignat}
with (positive) numerical viscosities $\widehat\alpha$ and $\widetilde \alpha$.
\end{definition}

\begin{remark}
 $\HHn$ and $\HHs$ are given in the so-called viscosity form of a numerical flux, see, e.g., \cite{GR1}. More generally, one can also consider matrices for $\widehat \alpha$ and $\widetilde \alpha$ instead of scalars. This plays a role in preconditioned schemes; here, it is omitted for the sake of simplicity. 
\end{remark}

For fixed $\eps$, consistency analysis of the scheme is well-known, see, e.g., \cite{Crouzeix1980,GR1,Kroener}. However, we have to consider both $\eps$ and $\dt$ as small parameters. The crucial point is that we restrict our analysis to cases where the magnitude of $u$ and its first and second derivatives are independent of $\eps$. (Especially, no derivative behaves as $O(\eps^{-1})$ or worse.) This assumption is reasonable, as only those solutions allow for an asymptotic limit as $\eps \rightarrow 0$. Similar assumptions have been made in \cite{KlMa81}.
\begin{lemma}
 Let $\vec{\underline U}^\itime$ denote the vector $\left(u(x_0, t^\itime), \ldots, u(x_{\Nx}, t^\itime)\right)$ with $u$ solution to \eqref{eq:underlyingequation} whose derivatives can be bounded \emph{independently} of $\eps$;  and let $\vec{\underline U} := (\vec{\underline U}^0, \ldots, \vec{\underline U}^{\imaxtime})$. Furthermore, let $O(\dx) = O(\dt)$. Then, the local truncation error is of order $\Delta x$, i.e., there holds for all $j = \{0, \ldots, J\}$ and for all $n \in \{0, \ldots N-1\}$, 
 \begin{align*}
    \IMEX(\vec{\underline U}) = O(\Delta x).
 \end{align*}
 \end{lemma}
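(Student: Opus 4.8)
The plan is to substitute the nodal values $\vec{\underline U}$ of the exact solution of \eqref{eq:underlyingequation} into the operator $\IMEX$ from \eqref{eq:imex} and to expand every term about the node $(x_\ispace,t^\itime)$, using only bounds on the first- and second-order derivatives of $u$ — which is precisely what the standing hypothesis provides uniformly in $\eps$. First I would rewrite the flux differences: by \eqref{eq:HHn}, $\bigl(\HHn_{\ispace+\frac12}^\itime-\HHn_{\ispace-\frac12}^\itime\bigr)/\dx$ equals $\tfrac{\widehat A}{2\dx}$ times the centered difference of $u^\itime$ minus $\tfrac{\widehat\alpha}{2\dx}$ times the second difference of $u^\itime$, and likewise by \eqref{eq:HHs} for the stiff flux, now evaluated at time $t^{\itime+1}$ and with $\widetilde A,\widetilde\alpha$.

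For each building block I would then invoke the intermediate-point (mean-value) identities $\tfrac{1}{2\dx}\bigl(u(x_{\ispace+1},\cdot)-u(x_{\ispace-1},\cdot)\bigr)=u_x(x_\ispace,\cdot)+O(\dx)$, $\tfrac{1}{\dx^{2}}\bigl(u(x_{\ispace+1},\cdot)-2u(x_\ispace,\cdot)+u(x_{\ispace-1},\cdot)\bigr)=u_{xx}(\zeta,\cdot)$, $\tfrac{1}{\dt}\bigl(u(x_\ispace,t^{\itime+1})-u(x_\ispace,t^\itime)\bigr)=u_t(x_\ispace,t^\itime)+O(\dt)$ and $u_x(x_\ispace,t^{\itime+1})=u_x(x_\ispace,t^\itime)+O(\dt)$. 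Collecting the leading contributions gives $\bigl(u_t+\widehat A u_x+\widetilde A u_x\bigr)(x_\ispace,t^\itime)$, which by the splitting \eqref{eq:splitting} equals $\bigl(u_t+Au_x\bigr)(x_\ispace,t^\itime)$ and hence vanishes by virtue of \eqref{eq:underlyingequation}. What is left over — the term $\tfrac{\dt}{2}u_{tt}$ from the time difference, a term $\widetilde A\cdot O(\dt)\,u_{xt}$ from shifting the stiff flux back to $t^\itime$, the two viscosity contributions $-\tfrac{\widehat\alpha\dx}{2}u_{xx}$ and $-\tfrac{\widetilde\alpha\dx}{2}u_{xx}$, and the $\widehat A\cdot O(\dx)$ and $\widetilde A\cdot O(\dx)$ centered-difference remainders — is formally $O(\dx)+O(\dt)=O(\dx)$, using $O(\dx)=O(\dt)$.

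The step I expect to be the real work is making this remainder bound \emph{uniform in} $\eps$: the stiff matrix $\widetilde A$, and a viscosity $\widetilde\alpha$ scaled to it, grow like $\eps^{-1}$, so every remainder term carrying a factor of $\widetilde A$ looks a priori like $O(\dx/\eps)$. The way around this is to write $\widetilde A=A-\widehat A$ and to exploit \eqref{eq:underlyingequation} once more: each occurrence of $A$ hitting a spatial derivative of $u$ can be traded for an extra time derivative, since $Au_x=-u_t$, $Au_{xx}=-u_{xt}$ and $Au_{xt}=-u_{tt}$. Hence $\widetilde A u_{xx}=-u_{xt}-\widehat A u_{xx}$ and $\widetilde A u_{xt}=-u_{tt}-\widehat A u_{xt}$, and both right-hand sides are bounded independently of $\eps$ because the first- and second-order derivatives of $u$ are (by hypothesis) and $\widehat A$ has $\eps$-uniformly bounded spectrum. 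Substituting these identities into the $\widetilde A$-weighted remainders — the stiff-flux centered-difference error $\widetilde A\cdot O(\dx)\,u_{xx}(\eta,\cdot)$ and the time-shift error $\widetilde A\cdot O(\dt)\,u_{xt}(x_\ispace,\cdot)$ — converts each into an $\eps$-uniform $O(\dx)$; with $\widehat\alpha,\widetilde\alpha$ taken independent of $\eps$ the viscosity terms are plainly $O(\dx)$ too. Combining the exact cancellation of the leading part with these uniform estimates yields $\IMEX(\vec{\underline U})=O(\dx)$.
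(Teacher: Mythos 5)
Your proposal is correct in outline and, at its core, is the same argument the paper gives: the paper's entire proof is the single sentence ``apply a Taylor expansion to \eqref{eq:imex} and note that all the derivatives of $u$ can be bounded independently of $\eps$.'' Your expansion of the flux differences, the cancellation $u_t+(\widehat A+\widetilde A)u_x=u_t+Au_x=0$, and the bookkeeping of the $O(\dx)+O(\dt)$ remainders is exactly what that sentence compresses. Where you go genuinely beyond the paper is in your last paragraph: you notice that the remainders carrying $\widetilde A$ are not obviously uniform in $\eps$, and you repair this by writing $\widetilde A=A-\widehat A$ and trading each $A\partial_x$ for $-\partial_t$ via the PDE, so that $\widetilde A u_{xx}=-u_{xt}-\widehat A u_{xx}$, and so on. That is a real gain in rigor over the paper, which simply asserts uniformity.

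One step of that uniformity argument is, however, not justified as stated: you bound $\widehat A u_{xx}$ and $\widehat A u_{xt}$ by appealing to the ``$\eps$-uniformly bounded spectrum'' of $\widehat A$. Bounded eigenvalues do not bound the operator norm of a non-normal matrix, and the paper's own splittings make the point: for the characteristic splitting of the prototype \eqref{eq:3dA}, $\widehat A$ contains entries of size $\eps^{-1}$ even though its eigenvalues are $a$ and $a\pm\sqrt 2$; one has $\widehat A=Q\widehat\Lambda Q^{-1}$ with the condition number of $Q$ blowing up as $\eps\to 0$. To close this you would need either an extra hypothesis that $\|\widehat A\|$ is bounded, or an argument that well-prepared solutions keep $u_{xx}$ and $u_{xt}$ (up to $O(\eps)$ corrections) in the subspace on which $\widehat A$ acts boundedly. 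Note also that your closing assumption that $\widehat\alpha,\widetilde\alpha$ are independent of $\eps$ excludes the choice $\widetilde\alpha=\sqrt{2}(1-\eps)/\eps$ used later in the paper, for which the viscosity remainder is only $O(\dx/\eps)$. Neither caveat is addressed by the paper's one-line proof either, so your attempt is, if anything, more candid about where the real difficulty of the $\eps$-uniform statement lies.
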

 \begin{proof}
  Apply a Taylor expansion to \eqref{eq:imex} and note that all the derivatives of $u$ can be bounded independently of $\eps$. \qed
 \end{proof}
 
 \begin{remark}
  Note that the condition that the derivatives of $u$ can be bounded \emph{independently} of $\eps$ is indeed a condition on the initial datum $u_0$. 
  Not every initial data gives rise to such a solution. 
  More precisely, this means that as $\eps \rightarrow 0$, there is indeed a solution to \eqref{eq:underlyingequation} that does not blow up. In the context of the low-Froude or low-Mach number limit, such a choice of initial conditions is often called \emph{well-prepared initial data}. For a discussion of the influence of initial conditions on the limit solution, we refer to the work by Klainerman and Majda \cite{KlMa81}. 
  However, also for examples from ordinary differential equations, there are analogues, see, e.g., \cite{Bo07,HaiWan}. 
 \end{remark}

\section{Modified Equation Analysis}
In this section, we derive the modified equation \cite{WaHy74}
corresponding to \eqref{eq:imex}. As we consider a periodic
setting, we can solve the resulting parabolic system explicitly
using Fourier series. Using Plancherel's theorem, we investigate
the stability of the modified equation. This yields a practical
criterion for the stability of the IMEX scheme.

We start by deriving the modified equation corresponding to \eqref{eq:imex}. 
\begin{theorem}\label{thm:modeq}
 Let $w$ be a smooth solution of
 \begin{align}
  \label{eq:mod_eq}
    w_t + Aw_x &= \frac{\dt}{2}
     \left(\frac{\left(\widehat \alpha
      + \widetilde \alpha\right)\dx}{\dt} \Id -  (\Ahat-\Atilde)A \right)
      w_{xx}.
 \end{align}
 Furthermore, we consider vectors $\vec{\underline W}^n := \left(w(x_0,t^n), \ldots, w(x_{\imaxspace}, t^n) \right)$ and $\vec{\underline W} := (\vec{\underline W}^0, \ldots, \vec{\underline W}^{\imaxtime})$.
 Then, for fixed $\eps$ and $O(\dx) = O(\dt)$, the IMEX scheme \eqref{eq:imex} is a second order accurate discretization of
 \eqref{eq:mod_eq}, i.e.
 \begin{align*}
  \IMEX(\vec{\underline W}) = O(\Delta x^2).
 \end{align*}
 \end{theorem}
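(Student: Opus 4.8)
The plan is to perform a Taylor expansion of $\IMEX(\vec{\underline W})$ about the point $(x_j, t^n)$, collect terms by order, and use the PDE \eqref{eq:mod_eq} to cancel everything up to and including $O(\dx^2)$. Since $O(\dx) = O(\dt)$, I will treat $\dx$ and $\dt$ as the single small parameter and keep track of all contributions through second order.

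First I would expand the time-difference term: $\frac{w_j^{n+1}-w_j^n}{\dt} = w_t + \frac{\dt}{2} w_{tt} + O(\dt^2)$, where all derivatives are evaluated at $(x_j,t^n)$. Next I would expand the two flux-difference terms. Using the viscosity-form definitions \eqref{eq:HHn}–\eqref{eq:HHs}, a standard computation gives for the nonstiff part $\frac{\HHn_{j+1/2}^n - \HHn_{j-1/2}^n}{\dx} = \Ahat w_x - \frac{\widehat\alpha\,\dx}{2} w_{xx} + O(\dx^2)$, the centered-difference contribution of $\Ahat$ being $O(\dx^2)$-accurate and the numerical-viscosity term contributing the $w_{xx}$ piece. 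For the stiff part I would first expand in space at time $t^{n+1}$ to get $\Atilde\, w_x(x_j,t^{n+1}) - \frac{\widetilde\alpha\,\dx}{2} w_{xx}(x_j,t^{n+1}) + O(\dx^2)$, and then expand in time: $w_x(x_j,t^{n+1}) = w_x + \dt\, w_{xt} + O(\dt^2)$, while the viscosity term is already $O(\dx)$ so its time-shift only affects the $O(\dx^2)$ remainder. Adding the two flux terms and using $\Ahat + \Atilde = A$ yields $A w_x + \Atilde\,\dt\, w_{xt} - \frac{(\widehat\alpha+\widetilde\alpha)\dx}{2} w_{xx} + O(\dx^2)$.

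Summing all pieces, $\IMEX(\vec{\underline W})$ equals $\big(w_t + A w_x\big) + \frac{\dt}{2} w_{tt} + \Atilde\,\dt\, w_{xt} - \frac{(\widehat\alpha+\widetilde\alpha)\dx}{2} w_{xx} + O(\dx^2)$. The key step is now to rewrite the first-order-in-$\dt$ terms using \eqref{eq:mod_eq} itself: to leading order $w_t = -A w_x + O(\dt)$, hence $w_{tt} = -A w_{xt} + O(\dt)$ and $w_{xt} = -A w_{xx} + O(\dt)$. Substituting, $\frac{\dt}{2} w_{tt} + \Atilde\,\dt\, w_{xt} = \frac{\dt}{2}\big(-A w_{xt}\big) + \Atilde\,\dt\,\big(-A w_{xx}\big) + O(\dt^2) = \frac{\dt}{2} A^2 w_{xx} - \dt\,\Atilde A\, w_{xx} + O(\dt^2)$. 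Writing $A^2 = (\Ahat+\Atilde)A$ so that $\frac12 A^2 - \Atilde A = \frac12(\Ahat - \Atilde)A$, the combined second-order term becomes $-\frac{\dt}{2}(\Ahat-\Atilde)A\, w_{xx} + \frac{(\widehat\alpha+\widetilde\alpha)\dx}{2} w_{xx}$ appearing with opposite sign on the right; that is exactly $-\frac{\dt}{2}\left(\frac{(\widehat\alpha+\widetilde\alpha)\dx}{\dt}\Id - (\Ahat-\Atilde)A\right) w_{xx}$, which cancels against the right-hand side of \eqref{eq:mod_eq} when $w$ solves it. What remains is $O(\dx^2)$, as claimed.

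The main obstacle — really the only place requiring care — is the bookkeeping in the mixed term: one must correctly account for the fact that $\HHs$ is evaluated at $t^{n+1}$ rather than $t^n$, producing the $\Atilde\,\dt\,w_{xt}$ contribution, and one must consistently replace time derivatives by space derivatives using the PDE to the appropriate order (noting that substituting $w_t = -Aw_x$ rather than $w_t = -Aw_x + O(\dt)$ into an $O(\dt)$-prefactored term only perturbs the $O(\dt^2)$ remainder, so no circularity arises). The asymmetry $\frac12(\Ahat-\Atilde)A$ in the modified diffusion — as opposed to a symmetric $\frac12 A^2$ — is entirely due to this implicit-in-time treatment of $\Atilde$, and getting its sign and coefficient right is the crux of the computation.
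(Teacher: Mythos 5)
Your proposal is correct and follows essentially the same route as the paper's proof: Taylor expansion of each term of $\IMEX$ about $(x_\ispace,t^\itime)$, with the time-shifted implicit flux producing the extra $\dt\,\Atilde A\,w_{xx}$ contribution via $w_{xt}=-Aw_{xx}+O(\dt)$, and $w_{tt}=A^2w_{xx}+O(\dt)$ from the equation itself; the only cosmetic difference is that the paper posits an unknown viscosity matrix $B$ and solves for it, while you verify the stated coefficient directly. The algebra $\tfrac12 A^2-\Atilde A=\tfrac12(\Ahat-\Atilde)A$ and the non-circularity remark match the paper's argument.
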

 \begin{proof}
 It is well-known that the modified equation for a first-order discretization is a parabolic equation, i.e., we expect $w$ to fulfill
 \begin{align}
  \label{eq:visc_prototype}
  w_t + A w_x &= B w_{xx}
 \end{align}
 for a (yet unknown) viscosity matrix $B$ that is in class $O(\dx)$. 
 Using \eqref{eq:visc_prototype}, one can conclude that 
 \begin{align}
  \label{eq:wt}
  w_t    &= -A w_x + B w_{xx} \\
  \label{eq:wtt}
  w_{tt} &= -A (w_t)_x + B (w_t)_{xx} \stackrel{\eqref{eq:wt}}= A^2 w_{xx} + O(\dx).
 \end{align}
 Note that this holds due to $O(\dx) = O(\dt)$, which we will from now on exploit frequently. 
 To simplify the presentation, we slightly abuse our notation, and write $\ww_\ispace^\itime$ for $w(x_\ispace, t^\itime)$.
 Using \eqref{eq:wtt} at position $(x_\ispace, t^\itime)$,
 \begin{align}
  \notag
  \frac{\ww_\ispace^{\itime+1}-\ww_\ispace^\itime}{\dt}
   &= w_t + \frac{\dt}{2} w_{tt} + O(\dx^2) \\
   &= w_t + \frac{\dt}{2} A^2 w_{xx} + O(\dx^2) \label{eq:apprt2}
 \end{align}
 and
 \begin{align}
 \notag
  \frac{\HHn_{j+\frac12}^\itime - \HHn_{j-\frac12}^\itime}{\dx}
   &= \frac1{2\dx} \Ahat \left(\ww_{j+1}^\itime - \ww_{j-1}^\itime \right)
    - \frac{\widehat \alpha}{2\dx}
     \left(\ww_{\ispace-1}^\itime - 2 \ww_\ispace^\itime + \ww_{\ispace+1}^\itime \right)
   \\ &=
    \Ahat \, w_x - \frac{\widehat \alpha \dx}{2} w_{xx} + O(\dx^2).\label{eq:explf2}
 \end{align}
 Similarly,
 \begin{align*}
   \frac{\HHs_{j+\frac12}^{\itime+1} - \HHs_{j-\frac12}^{\itime+1}}{\dx}
   &=
   \frac1{2\dx} \Atilde
    \left(\ww_{j+1}^{\itime+1} - \ww_{j-1}^{\itime+1} \right)
   - \frac{\widetilde \alpha}{2\dx}
    \left(\ww_{\ispace-1}^{\itime+1} - 2 \ww_\ispace^{\itime+1} + \ww_{\ispace-1}^{\itime+1} \right)
  \\ &=
   \Atilde w_x(x_\ispace,t^{\itime+1})
   - \frac{\widetilde \alpha \dx}{2} w_{xx}(x_\ispace,t^{\itime+1})
   + O(\dx^2).
 \end{align*}
 From \eqref{eq:wt},
 \begin{align*}
   w_x(x_\ispace,t^{\itime+1})
  &=
  w_x(x_\ispace,t^{\itime}) - \dt A w_{xx} + O(\dx^2),
 \end{align*}
while $w_{xx}(x_\ispace,t^{\itime+1}) =
w_{xx}(x_\ispace,t^{\itime}) + O(\dx)$.
 Therefore,
 \begin{align}
 \label{eq:implf}
   \frac{\HHs_{j+\frac12}^{\itime+1} - \HHs_{j-\frac12}^{\itime+1}}{\dx}
  &=
  \Atilde w_x - \dt \Atilde A w_{xx}
   - \frac{\widetilde \alpha \dx}{2} w_{xx} + O(\dx^2).
 \end{align}

Now we plug \eqref{eq:apprt2}, \eqref{eq:explf2} and
\eqref{eq:implf} into \eqref{eq:imex} to obtain, always at position
$(x_\ispace, t^\itime)$,
 \begin{align*}
   & \hphantom{=}\IMEX(\vec{\underline W})
  \\ &=
   w_t + \frac{\dt}{2} A^2 w_{xx}
    + \widehat A w_x - \frac{\widehat \alpha}{2} \dx w_{xx}
  \\ &\hphantom{=}
   + \widetilde A w_x - \dt \widetilde A A w_{xx}
    - \frac{\widetilde \alpha}{2} \dx w_{xx} + O(\dx^2 )
  \\ &=
   w_t + (\widehat A + \widetilde A) w_x
  \\ &\hphantom{=}
   + \frac{\dt}{2} \left(A^2  - 2\widetilde A A
    - {\widehat \alpha} \frac{\dx}{\dt} \Id
     - {\widetilde \alpha} \frac{\dx}{\dt}\Id\right) w_{xx} + O(\dx^2).
 \end{align*}
 This is $O(\dx^2)$ if and only if $w$ fulfills \eqref{eq:visc_prototype} with
 \begin{align}
  \notag
  B &= \frac{\dt}{2} \left(-A^2  + 2\widetilde A A + {\widehat \alpha} \frac{\dx}{\dt} \Id + {\widetilde \alpha} \frac{\dx}{\dt} \Id \right) \\
  \label{eq:BB}
    &= \frac{\dt}{2} \left(\frac{\left(\widehat \alpha + \widetilde \alpha \right) \dx}{\dt} \Id - (\widehat A - \widetilde A) A \right) .
 \end{align}
 Note again that we have repeatedly used the assumption $O(\dx) = O(\dt)$. This proves the claim. \qed
\end{proof}

In the sequel, we show how \eqref{eq:mod_eq} can be used to
determine a necessary condition under what $\cfl$ condition the IMEX
scheme \eqref{eq:imex} is stable. We begin by deriving an
exact solution to \eqref{eq:visc_prototype} using a Fourier
ansatz. Note that $A$ is a $d\times d$
matrix.
\begin{lemma}\label{la:parabolic}
Let $w_0$ be given by
 \begin{align}
 \label{eq:w0fourier}
  w_0(x) &= \sum_{\ifourier \in \Z} \vecd{a^1_{0\ifourier}}{a^d_{0\ifourier}} e^{i 2 \pi \ifourier x}.
 \end{align}
 Furthermore, let $w$ be a solution to
 \begin{alignat}{2}
  \label{eq:parabolic}
  w_t + A w_x &= Bw_{xx} &\quad& \forall (x, t) \in \OO \times (0, T) \\
  w(x, 0) & = w_0(x) &\quad& \forall x \in \OO. \notag
 \end{alignat}
 Then, $w$ admits a representation
 \begin{align}
 \label{eq:wfourier}
  w(x, t) &= \sum_{\ifourier \in \Z} \vecd{a^1_\ifourier(t)}{a^d_\ifourier(t)} e^{i 2 \pi \ifourier x}
 \end{align}
 with $a^1_\ifourier, \ldots, a^d_\ifourier$ fulfilling the system of $d$ ordinary differential equations
 \begin{align}
 \label{eq:anbn}
  \vecd{a^1_\ifourier(t)'}{a^d_\ifourier(t)'} &= \Aa \vecd{a^1_\ifourier(t)}{a^d_\ifourier(t)}
 \end{align}
 for
 \begin{align}
  \label{eq:AA}
  \Aa := (-i 2 \pi \ifourier A - 4 \pi^2 \ifourier^2 B)
 \end{align}
 and initial conditions
 \begin{align*}
  \vecd{a_\ifourier^1(0)}{a_\ifourier^d(0)} &= \vecd{a^1_{0\ifourier}}{a^d_{0\ifourier}}.
 \end{align*}
 \end{lemma}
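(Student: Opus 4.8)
The plan is to pass to Fourier space directly and track the evolution of the coefficient vectors. Since for each fixed $t \in [0,T]$ the function $w(\cdot,t)$ is smooth and periodic on $\OO = [0,1]$, it possesses a Fourier series, so that the representation \eqref{eq:wfourier} holds with coefficient vectors obtained componentwise by
\begin{align*}
 \vecd{a^1_\ifourier(t)}{a^d_\ifourier(t)} = \int_0^1 w(x,t)\, e^{-i 2 \pi \ifourier x}\, dx .
\end{align*}
What remains is to identify the ODE governing these coefficients and their initial values.

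First I would differentiate under the integral sign in $t$, which is legitimate because $w$, and hence $w_t$, is smooth; writing $a_\ifourier$ for the coefficient vector this gives $a_\ifourier'(t) = \int_0^1 w_t(x,t)\, e^{-i 2\pi \ifourier x}\,dx$. Substituting the equation \eqref{eq:parabolic}, $w_t = -A w_x + B w_{xx}$, and integrating by parts once, respectively twice, with all boundary contributions cancelling by periodicity, turns the $x$-derivatives into the multipliers $i 2\pi\ifourier$, respectively $-4\pi^2\ifourier^2$, acting on $a_\ifourier(t)$. Collecting the two terms yields exactly
\begin{align*}
 a_\ifourier'(t) = \left(-i 2\pi\ifourier A - 4\pi^2\ifourier^2 B\right) a_\ifourier(t) = \Aa\, a_\ifourier(t),
\end{align*}
which is \eqref{eq:anbn} with $\Aa$ as in \eqref{eq:AA}. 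Evaluating the coefficient integral at $t = 0$ and comparing with the prescribed expansion \eqref{eq:w0fourier} of $w_0$ gives $a_\ifourier(0) = a_{0\ifourier}$, the stated initial condition. (Equivalently, and perhaps more in the spirit of a Fourier ansatz, one may insert the series \eqref{eq:wfourier} into \eqref{eq:parabolic}, use $\partial_x e^{i 2\pi\ifourier x} = i 2\pi\ifourier\, e^{i 2\pi\ifourier x}$ and $\partial_{xx} e^{i 2\pi\ifourier x} = -4\pi^2\ifourier^2\, e^{i 2\pi\ifourier x}$, and match coefficients of $e^{i 2\pi\ifourier x}$ using orthonormality of the trigonometric system; this produces the same decoupled systems.)

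I do not anticipate a serious obstacle: the only points requiring care are the interchange of differentiation and integration and the termwise handling of the series, both of which are covered by the smoothness hypothesis placed on $w$. The genuinely delicate issue — namely whether the series assembled from the explicit ODE solutions $a_\ifourier(t) = e^{t\Aa} a_{0\ifourier}$ actually defines a classical solution, which would require controlling the growth of $e^{t\Aa}$ in $\ifourier$ when $B$ is not negative definite — is precisely the stability question analyzed in the remainder of the paper and is not needed for the statement as phrased here.
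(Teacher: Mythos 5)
Your proof is correct and follows essentially the same route as the paper, which simply inserts the ansatz \eqref{eq:wfourier} into \eqref{eq:parabolic} and matches coefficients of $e^{i2\pi \ifourier x}$ --- exactly the alternative you note in your parenthetical. Your primary version (projecting onto Fourier modes and integrating by parts) is marginally more careful about where the representation comes from, but it is the same argument.
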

 \begin{proof}
  The proof exploits direct computations and starts with \emph{assuming} that the representation \eqref{eq:wfourier} is correct. Thus, plugging \eqref{eq:wfourier} into \eqref{eq:parabolic}, one obtains
  \begin{align*}
   \sum_{\ifourier \in \Z} \left( \vecd{a^1_\ifourier(t)'}{a^d_\ifourier(t)'}  + i 2 \pi \ifourier  A \vecd{a^1_\ifourier(t)}{a^d_\ifourier(t)}  +  4 \pi^2 \ifourier^2 B \vecd{a^1_\ifourier(t)}{a^d_\ifourier(t)}  \right) e^{i 2 \pi \ifourier x} &= 0.
  \end{align*}
  Exploiting the linear independence of $e^{i 2 \pi \ifourier x}$ for different $\ifourier$, one obtains \eqref{eq:anbn}. \qed
 \end{proof}

\begin{remark}
\begin{itemize}
 \item Every periodic smooth function $w_0$ can be written as in \eqref{eq:w0fourier}. 
 \item For future reference, we call $\Aa$ the {\em frequency matrices} of the modified equation  \eqref{eq:mod_eq}.
\end{itemize}
\end{remark}

The following corollary is a direct consequence from the theory of ordinary differential equations, and Plancherel's theorem.
\begin{corollary}\label{cor:l2norm}
 We consider the setting as in Lemma \ref{la:parabolic}. Then,
 \begin{align}
  \label{eq:l2normconservation}
  \|w(\cdot, t)\|_{L^2(\OO)} \leq C \|w_0(\cdot)\|_{L^2(\OO)}
 \end{align}
 holds for a positive constant $C$ if
 \begin{align*}
  \Real(\eigAk_{k,i}) < 0
 \end{align*}
 for all eigenvalues $\eigAk_{k,i}$ of $\Aa$ with
 $\ifourier \in \Z^{\neq 0}$. 
\end{corollary}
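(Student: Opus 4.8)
The plan is to pass to the modal representation of Lemma~\ref{la:parabolic}, write each modal coefficient vector as a matrix exponential, and then use Plancherel's theorem to reduce the assertion to a \emph{uniform} bound on the family $\{\exp(t\Aa)\}_{\ifourier\in\Z,\,t\ge0}$. Indeed, since \eqref{eq:anbn} is a constant-coefficient linear system, $(a^1_\ifourier(t),\ldots,a^d_\ifourier(t))^T = \exp(t\Aa)(a^1_{0\ifourier},\ldots,a^d_{0\ifourier})^T$, so that substituting into \eqref{eq:wfourier} and using orthonormality of $\{e^{i2\pi\ifourier x}\}_{\ifourier\in\Z}$ in $L^2(\OO)$ gives
\begin{align*}
  \|w(\cdot,t)\|_{L^2(\OO)}^2
   &= \sum_{\ifourier\in\Z}\bigl|\exp(t\Aa)(a^1_{0\ifourier},\ldots,a^d_{0\ifourier})^T\bigr|^2 \\
   &\le C^2\sum_{\ifourier\in\Z}\bigl|(a^1_{0\ifourier},\ldots,a^d_{0\ifourier})^T\bigr|^2
   = C^2\,\|w_0\|_{L^2(\OO)}^2 ,
\end{align*}
where $C := \sup_{\ifourier\in\Z}\sup_{t\ge0}\|\exp(t\Aa)\|_2$. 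Thus \eqref{eq:l2normconservation} will follow as soon as $C<\infty$ is established.

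To bound $C$ I would treat the modes individually. For $\ifourier=0$ one has $\mathcal A_0=0$ by \eqref{eq:AA}, so $\exp(t\mathcal A_0)=\Id$ and its contribution is $1$ (conservation of the spatial mean). For $\ifourier\ne0$, diagonalizing $\Aa=S_\ifourier\,\diag(\eigAk_{\ifourier,1},\ldots,\eigAk_{\ifourier,d})\,S_\ifourier^{-1}$ in the generic case gives $\|\exp(t\Aa)\|_2\le\kappa_2(S_\ifourier)\max_i e^{t\Real(\eigAk_{\ifourier,i})}\le\kappa_2(S_\ifourier)$ because $\Real(\eigAk_{\ifourier,i})<0$ by hypothesis; a defective $\Aa$ is handled by replacing $S_\ifourier$ with a Jordan transformation, the polynomial-in-$t$ factors of the nilpotent parts being dominated by the exponentials with strictly negative exponent. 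In either case $\sup_{t\ge0}\|\exp(t\Aa)\|_2<\infty$ for every fixed $\ifourier$.

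The step I expect to be the main obstacle is making these per-mode bounds uniform in $\ifourier$, since strict negativity of the spectrum does not by itself control the non-normality of $\Aa$, i.e.\ the size of $\kappa_2(S_\ifourier)$ as $|\ifourier|\to\infty$. Here I would use the explicit form \eqref{eq:AA}: dividing by $4\pi^2\ifourier^2$ gives $\Aa/(4\pi^2\ifourier^2)=-B-\frac{i}{2\pi\ifourier}A\to -B$ as $|\ifourier|\to\infty$, with $B$ the viscosity matrix \eqref{eq:BB}; hence (after rescaling) the eigenvalues and spectral projectors of $\Aa$ converge to those of $-B$, and in particular the hypothesis forces every eigenvalue of $B$ to have nonnegative real part. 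Provided $-B$ is itself nondefective this yields $\sup_{|\ifourier|\ge M}\kappa_2(S_\ifourier)<\infty$ for large $M$; combined with the maximum of the finitely many remaining per-mode bounds and the value $1$ for $\ifourier=0$, this gives $C<\infty$, hence \eqref{eq:l2normconservation}. In the fully degenerate case one would additionally exploit that the dominant part $-4\pi^2\ifourier^2 B$ drives $\Real(\eigAk_{\ifourier,i})$ to $-\infty$ like $-\ifourier^2$, which suppresses any algebraic growth of the transformation matrices; I would not carry this out in detail here.
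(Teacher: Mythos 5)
Your route is exactly the one the paper intends: the paper gives no written proof of this corollary at all, merely remarking that it is ``a direct consequence from the theory of ordinary differential equations, and Plancherel's theorem,'' and your modal decomposition, the representation $a_\ifourier(t)=\exp(t\Aa)a_{0\ifourier}$, and the reduction via Plancherel to $C=\sup_{\ifourier,t}\|\exp(t\Aa)\|_2$ is precisely that argument made explicit. Where you go beyond the paper is in flagging the genuine mathematical content: strict negativity of $\Real(\eigAk_{k,i})$ for each individual $\ifourier\neq0$ does \emph{not} by itself yield a $\ifourier$-uniform bound on $\|\exp(t\Aa)\|_2$, since neither the spectral gap nor the conditioning of the eigenvector matrices is controlled a priori. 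Your repair via the rescaling $\Aa/(4\pi^2\ifourier^2)\to -B$ is sound in the generic case and is essentially the only way to close the gap; note also that in the defective-but-uniformly-dissipative case the transient hump of $\exp(t\Aa)$ stays bounded because the nilpotent coupling and the decay rate both scale like $\ifourier^2$, so the two effects cancel in $\sup_t\|\exp(t\Aa)\|_2$. The residual hypotheses you need (nondefectiveness of $B$, or a uniform treatment of the degenerate case) are a gap in the corollary \emph{as stated}, not in your argument; the paper simply does not address them, consistent with its declared heuristic use of the modified-equation analysis. In the applications of Sections 5--7 the matrices are either simultaneously diagonalizable with explicitly computed, uniformly separated spectra (characteristic splitting) or explicitly shown to violate the hypothesis, so the uniformity issue never bites there.
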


\begin{remark}
 One might argue that Corollary \ref{cor:l2norm} is not needed in the sense that for every matrix $B$ with $B$ positive definite, there holds \eqref{eq:l2normconservation}. 
 However, this is not a necessary condition. Consider, e.g., the pair of matrices $A = \Id$ and $B = \left(\begin{matrix} 5 & 1 \\ -2 & 0 \end{matrix} \right)$. Obviously, $B$ is not positive definite (note that $x^T B x < 0$ for, e.g., $x := (1, 10)^T$), however, the eigenvalues of $\Aa$ have negative real part, and consequently, the complete system \eqref{eq:parabolic} is stable. (A tedious computation reveals that the eigenvalues of $\Aa$ are $2\pi k\left((\pm\sqrt{17} - 5)\pi k - i \right)$.)
 \end{remark}
 
 As already pointed out in the introduction, we have the following important remark concerning commutative matrices:
\begin{remark}
The real part of the eigenvalues of $\Aa$ is \emph{not} affected by the terms stemming from $A$ if matrices $A$ and $B$ can be simultaneously diagonalized. This is the motivation for introducing so called \emph{characteristic splittings} in the following section.
 \end{remark}

\section{Characteristic Splitting}
In this section, we introduce a new class of splittings that, with our analysis to be presented, turns out to be uniformly stable in $\eps$ \emph{without} any additional stabilization terms. The splitting relies on a characteristic decomposition of the matrix $A$, i.e., $A$ can be decomposed into 
  \begin{align}
   \label{eq:charsplittingA}
   A = Q \Lambda Q^{-1}
  \end{align}
  for an invertible $Q$ and $\Lambda := \diag(\lambda_1,\ldots,\lambda_d)$. 
  The idea of the characteristic splitting is to split the matrix $\Lambda$ into stiff and nonstiff parts. We make this more precise in the following definition:
  \begin{definition}
  Let $A$ be decomposed as in \eqref{eq:charsplittingA}, and let $\Lambda$ be split into 
  \begin{align*}
   \Lambda &= \widehat \Lambda + \widetilde \Lambda
  \end{align*}
  in such a way that $\widehat \Lambda$ and $\widetilde \Lambda$ are diagonal matrices and define an admissible splitting of $\Lambda$ in the sense of Definition \ref{def:splitting}. 
  (Consequently, the entries of $\widehat \Lambda$ can be bounded independently of $\eps$.) Subsequently, the \emph{characteristic splitting} is defined by
  \begin{align}
   \label{eq:splitting3dA}
   \Ahat = Q \widehat \Lambda Q^{-1}
   \quad \mathrm{and} \quad
   \Atilde = Q \widetilde \Lambda Q^{-1}.
  \end{align}   
  \end{definition} 

  Obviously, the splitting of $A$ is admissible in
  the sense of Definition \ref{def:splitting}.
 
 Let us make the following remark about characteristic splittings:
 \begin{remark}
  \begin{itemize}
   \item As the system \eqref{eq:underlyingequation} is hyperbolic for all $\eps > 0$, one can always obtain an admissible characteristic splitting by choosing $\widehat \Lambda$ as $\Lambda_{|\eps = 1}$, and $\widetilde \Lambda := \Lambda - \widehat \Lambda$. Obviously, the eigenvalues of both $\widehat A$ and $\widetilde A$ are real, and those of $\widehat A$ are trivially independent of $\eps$. Such a decomposition would lead to a fully explicit scheme for $\eps = 1$, which is desirable as for nonstiff equations, those schemes usually are less diffusive than implicit ones. 
   \item Note that $Q$ still depends on $\eps$, and so, even if $\widehat \Lambda$ is independent of $\eps$, $\widehat A$ generally is not (but its eigenvalues are). 
   \item As one can see from Section \ref{sec:stability} and especially Section \ref{sec:nonuniformity}, a crucial part in our analysis is the fact that $\widehat A$ and $\widetilde A$ commute, i.e., $\widetilde A \widehat A = \widehat A \widetilde A$. In this way, the 'bad' modes that can potentially destroy uniform stability are ruled out. 
  \end{itemize}

 \end{remark}

 In the sequel, we apply this concept to a prototype matrix $A$, given by
 
\label{sec:prototypesplitting}
  %
  \begin{align}
  \label{eq:3dA}
  A = \left( \begin{matrix} a & 1 & 0 \\ \frac1{\eps^2} & a & \frac1{\eps^2} \\ 0 & 1 & a \end{matrix} \right).
  \end{align}
  Its eigenvalues are
  \begin{align*}
   \lambda = a, a \pm \frac{\sqrt{2}}{\eps},
  \end{align*}
and for simplicity, we consider $a$ to be positive, i.e., $\lmax
:= a + \frac{\sqrt{2}}{\eps}$ is the largest eigenvalue.

In order to be fully explicit for $\eps = 1$, we use a characteristic splitting with 
  \begin{subequations}
  \begin{align*}
   \widehat\Lambda &:= \diag \left(a-\sqrt{2}, a, a + \sqrt{2}\right),
   \\
   \widetilde \Lambda &:= \diag \left(-\frac{\sqrt{2}(1-\eps)}{\eps},
    0, \frac{\sqrt{2}(1-\eps)}{\eps}\right).
  \end{align*}
  \end{subequations}
Consequently, we can derive matrices $\widehat A$ and $\widetilde A$ via \eqref{eq:splitting3dA} as
  \begin{align*}
   \widehat   A
    =  \left(\begin{matrix} a & \eps & 0 \\
     \frac1{\eps} & a & \frac{1}{\eps} \\
      0 & \eps & a \end{matrix} \right),
    \quad
   \widetilde A
    =  \left(\begin{matrix} 0 & 1-\eps & 0 \\
     \frac{1-\eps}{\eps^2} & 0 & \frac{1-\eps}{\eps^2} \\
      0 & 1-\eps & 0\end{matrix} \right).
  \end{align*}

The focus of this paper is on uniform stability  as $\eps \to 0$,
where the fast wave speeds tend to infinity. As outlined in the
introduction, the goal is to overcome the inefficiency of a fully
explicit scheme due to condition \eqref{eq:cflmax}, or the instability due to condition
\eqref{eq:cfladv}. 
In the following (cf. Theorem~\ref{thm:charsplittingstable}), we derive
upper bounds on the nonstiff $\CFL$ number that assure stability (in a sense  to be made more precise) of
IMEX scheme \eqref{eq:imex} for a characteristic splitting.

\section{Stability Of Characteristic Flux Splittings}\label{sec:stability}
Now, we combine Theorem \ref{thm:modeq} and Corollary
\ref{cor:l2norm} to obtain a necessary criterion under what
circumstances the IMEX scheme \eqref{eq:imex} is stable. 
We start with the general case, and subsequently consider the prototype equation. 

\subsection{General case}

 We consider the characteristic splitting \eqref{eq:splitting3dA} in the light of Corollary \ref{cor:l2norm}. For a generic splitting with \emph{commuting} matrices $\widehat A$ and $\widetilde A$, the frequency matrix $\Aa$ (see \eqref{eq:AA} and \eqref{eq:BB}) can be written as 
\begin{align}
  \label{eq:aa}
  \Aa &= -i 2 \pi \ifourier A - 2 \pi^2 \ifourier^2 {\dt} \left(\frac{\left(\widehat \alpha + \widetilde \alpha \right) \dx}{\dt} \Id - (\widehat A - \widetilde A) (\widehat A + \widetilde A) \right) \\
  \label{eq:aa2}
  &= -i 2 \pi \ifourier A - 2 \pi^2 \ifourier^2 {\dt} \left(\frac{\left(\widehat \alpha + \widetilde \alpha \right) \dx}{\dt} \Id - \widehat A^2 + \widetilde A^2 \right). 
\end{align}
Note that $\mathcal{A}_0 = 0$, since constant (in space) solutions of the
modified equations are constant in time also. Therefore we need to
analyze only the case $k\neq0$.
As we rely on a characteristic splitting, all the matrices occurring in \eqref{eq:aa2} can be written as $Q \Sigma Q^{-1}$ for some diagonal matrix $\Sigma$. Thus, it is easy to see that the real part $\eigAk_{k,i}$ of the eigenvalues of $\Aa$ is given by 
 \begin{align*}
  \Real(\eigAk_{k,i}) = 2\pi^2 k^2 \dt \left(-\frac{\left(\widehat \alpha + \widetilde \alpha \right) \dx}{\dt} + \widehat \lambda_i^2 - \widetilde \lambda_i^2 \right)
 \end{align*}
 where $\widehat \lambda_i$ and $\widetilde \lambda_i$ are eigenvalues to $\widehat A$ and $\widetilde A$, respectively. Claiming that $\Real(\eigAk_{k,i})$ is negative leads to 
 \begin{align}
  \label{eq:splittingrestriction1}
  \frac{\left(\widehat \alpha + \widetilde \alpha \right) \dx}{\dt} > \widehat \lambda_i^2 - \widetilde \lambda_i^2. 
 \end{align}
 This leads to a time step restriction that only depends on the \emph{explicit} part. We summarize this in the following lemma: 
 \begin{lemma} 
  Let $\frac{\dt}{\dx}$ be restricted by 
    \begin{align}
    \label{eq:splittingrestriction}
    \frac{\dt}{\dx} < \frac{\widehat \alpha + \widetilde \alpha}{\max_i \widehat \lambda_i^2}
    \end{align}
  Then, \eqref{eq:splittingrestriction1} and thus Corollary \ref{cor:l2norm} hold.
 \end{lemma}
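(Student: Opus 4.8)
The plan is to chain the hypothesis \eqref{eq:splittingrestriction} through the eigenvalue formula derived immediately above the statement; no new machinery is needed. First I would rewrite \eqref{eq:splittingrestriction} equivalently as $\frac{(\widehat\alpha + \widetilde\alpha)\dx}{\dt} > \max_i \widehat\lambda_i^2$. This step is legitimate precisely because the numerical viscosities are assumed positive, so $\widehat\alpha + \widetilde\alpha > 0$, and because $\dt, \dx > 0$; hence multiplying across by $\frac{\dx}{\dt}\,(\widehat\alpha+\widetilde\alpha)^{-1}$ preserves the direction of the inequality.

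Next, for each fixed index $i$ I would use the elementary bound $\widehat\lambda_i^2 - \widetilde\lambda_i^2 \le \widehat\lambda_i^2 \le \max_j \widehat\lambda_j^2$, where the first inequality is just $\widetilde\lambda_i^2 \ge 0$ (recall $\widetilde A$ has real eigenvalues by admissibility of the splitting, Definition \ref{def:splitting}). Combining this with the rewritten hypothesis gives $\frac{(\widehat\alpha + \widetilde\alpha)\dx}{\dt} > \widehat\lambda_i^2 - \widetilde\lambda_i^2$ for every $i$, which is exactly \eqref{eq:splittingrestriction1}.

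Finally I would invoke the computation preceding the statement: for a characteristic splitting all of $A$, $\widehat A$, $\widetilde A$ are simultaneously diagonalized by $Q$, so by \eqref{eq:aa2} the eigenvalues $\eigAk_{k,i}$ of $\Aa$ satisfy $\Real(\eigAk_{k,i}) = 2\pi^2 k^2 \dt\bigl(-\frac{(\widehat\alpha + \widetilde\alpha)\dx}{\dt} + \widehat\lambda_i^2 - \widetilde\lambda_i^2\bigr)$. For $k \in \Z^{\neq 0}$ the factor $2\pi^2 k^2 \dt$ is strictly positive, so \eqref{eq:splittingrestriction1} forces $\Real(\eigAk_{k,i}) < 0$ for all such $k$ and all $i$. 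By Corollary \ref{cor:l2norm} this yields the $L^2$ bound \eqref{eq:l2normconservation}, which is the assertion.

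I do not expect any real obstacle: the lemma is essentially a bookkeeping consequence of the eigenvalue formula, and it is stated as a sufficient (not sharp) condition, which is why the crude estimate $-\widetilde\lambda_i^2 \le 0$ suffices. The only points requiring a moment's care are the positivity argument when clearing denominators and the observation that Corollary \ref{cor:l2norm} only demands negativity of $\Real(\eigAk_{k,i})$ for $k \neq 0$ — consistent with $\mathcal{A}_0 = 0$, for which the real part vanishes and no bound could possibly be obtained.
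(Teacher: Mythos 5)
Your proposal is correct and follows exactly the argument the paper intends: the paper states this lemma without a separate proof because it is an immediate consequence of the eigenvalue formula $\Real(\eigAk_{k,i}) = 2\pi^2 k^2 \dt\bigl(-\frac{(\widehat\alpha + \widetilde\alpha)\dx}{\dt} + \widehat\lambda_i^2 - \widetilde\lambda_i^2\bigr)$ derived just above it, combined with the crude bound $\widehat\lambda_i^2 - \widetilde\lambda_i^2 \le \max_i \widehat\lambda_i^2$ that you use. Your additional remarks on positivity when clearing denominators, the reality of $\widetilde\lambda_i$ from admissibility, and the exclusion of $k=0$ are all consistent with the paper's reasoning.
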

 
 %
 This is a good result, as $\widehat \lambda_i^2$ can be bounded independently of $\eps$, and therefore, \eqref{eq:splittingrestriction} is also independent of $\eps$. We summarize this in the following theorem.

\begin{theorem}\label{thm:charsplittingstable}
 The characteristic splitting as introduced in \eqref{eq:splitting3dA} is such that $\Real(\eigAk_{k,i}) < 0$ holds for all $\ifourier \in \Z^{\neq 0}$, $\eigAk_{k,i}$ eigenvalue to $\Aa$, with a restriction on the time step size that is independent of $\eps$. 
\end{theorem}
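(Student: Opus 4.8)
The plan is to assemble the theorem from the three pieces already in place: Theorem~\ref{thm:modeq} (the modified equation and its viscosity matrix $B$), Lemma~\ref{la:parabolic} together with Corollary~\ref{cor:l2norm} (the Fourier/ODE reduction and the sign criterion on $\Real(\eigAk_{k,i})$), and the structural observation that for a characteristic splitting all of $A$, $\Ahat$, $\Atilde$, and hence $B$, are simultaneously diagonalizable via the same $Q$. First I would record that, by \eqref{eq:aa2}, the frequency matrix is $\Aa = -i2\pi\ifourier A - 2\pi^2\ifourier^2\dt\bigl(\tfrac{(\widehat\alpha+\widetilde\alpha)\dx}{\dt}\Id - \Ahat^2 + \Atilde^2\bigr)$, and that conjugating by $Q^{-1}$ turns this into a diagonal matrix whose $i$-th entry is $-i2\pi\ifourier\lambda_i - 2\pi^2\ifourier^2\dt\bigl(\tfrac{(\widehat\alpha+\widetilde\alpha)\dx}{\dt} - \lhat_i^2 + \ltilde_i^2\bigr)$. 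Since similarity does not change eigenvalues, the eigenvalues of $\Aa$ are exactly these numbers, so $\Real(\eigAk_{k,i}) = 2\pi^2\ifourier^2\dt\bigl(-\tfrac{(\widehat\alpha+\widetilde\alpha)\dx}{\dt} + \lhat_i^2 - \ltilde_i^2\bigr)$; the purely imaginary advective contribution drops out of the real part entirely. This is the decisive simplification that the commuting property buys us.

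Next I would impose the time-step restriction. For $\ifourier\neq 0$ the prefactor $2\pi^2\ifourier^2\dt$ is strictly positive, so $\Real(\eigAk_{k,i})<0$ holds for every $i$ and every such $\ifourier$ precisely when $\tfrac{(\widehat\alpha+\widetilde\alpha)\dx}{\dt} > \lhat_i^2 - \ltilde_i^2$ for all $i$, i.e.\ when $\tfrac{(\widehat\alpha+\widetilde\alpha)\dx}{\dt} > \max_i(\lhat_i^2 - \ltilde_i^2)$. Since $\ltilde_i^2\ge 0$, it suffices that $\tfrac{\dt}{\dx} < \tfrac{\widehat\alpha+\widetilde\alpha}{\max_i \lhat_i^2}$, which is exactly condition \eqref{eq:splittingrestriction} established in the preceding lemma. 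Then I would invoke Corollary~\ref{cor:l2norm} to conclude $L^2$-stability of the modified equation \eqref{eq:mod_eq}. The final point is the $\eps$-uniformity: by Definition~\ref{def:splitting} applied to the characteristic splitting, the entries $\lhat_i$ of $\widehat\Lambda$ are bounded independently of $\eps$, so $\max_i\lhat_i^2$ is bounded independently of $\eps$, and therefore the bound on $\tfrac{\dt}{\dx}$ in \eqref{eq:splittingrestriction} does not degenerate as $\eps\to 0$. That is the whole content of the theorem.

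Honestly, there is no serious obstacle here — the theorem is essentially a corollary of the lemma just above it, restated to emphasize the $\eps$-uniformity. The only subtlety worth being careful about is the claim that the eigenvalues of $\Aa$ can be read off from the diagonal form: one must note that $\Aa$ is a fixed linear combination, with scalar coefficients, of $\Id$, $A$, $\Ahat^2$, and $\Atilde^2$, and that because the characteristic splitting uses one and the same $Q$ for all of $A$, $\Ahat$, $\Atilde$, this linear combination is itself of the form $Q(\,\cdot\,)Q^{-1}$ with a diagonal middle factor; only then is the spectrum simply the set of diagonal entries. A secondary cosmetic point is making explicit that $\mathcal{A}_0 = 0$ and hence the mode $\ifourier = 0$ is irrelevant (constant-in-space data stays constant), which is already observed in the text before \eqref{eq:aa2}. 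With those remarks in hand the proof is a two-line assembly, so I would keep it short and simply cite the lemma, Corollary~\ref{cor:l2norm}, and Definition~\ref{def:splitting}.
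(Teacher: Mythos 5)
Your proposal is correct and follows essentially the same route as the paper: it reads off the eigenvalues of $\Aa$ from the simultaneous diagonalization by $Q$, observes that the advective term contributes only to the imaginary part, arrives at the sufficient condition $\frac{\dt}{\dx} < \frac{\widehat\alpha+\widetilde\alpha}{\max_i \widehat\lambda_i^2}$ of \eqref{eq:splittingrestriction}, and concludes $\eps$-uniformity from the boundedness of the entries of $\widehat\Lambda$. The paper likewise treats the theorem as a direct summary of the preceding lemma, so no further comment is needed.
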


In the sequel, we consider a prototype system in more detail to obtain quantitative information. 

\subsection{Characteristic Splitting of prototype equation}
In this section, we consider the prototype matrix from Section \ref{sec:prototypesplitting}, as it allows for easy and explicit computations.
The non-dimensionalized advective $\cfl$ number corresponding to $A$ is denoted by 
\begin{align}
 \label{eq:widehatnu}
 \widehat \nu := \frac{a \dt}{\dx}.
\end{align}
Note that using $\widehat \nu$, \eqref{eq:splittingrestriction} reads
\begin{align*}
 \widehat \nu < \frac{a\left(\widehat \alpha + \widetilde \alpha\right)}{(a + \sqrt{2})^2}.
\end{align*}
In the sequel, we give stronger bounds on the allowable time step size. 
  Given $k \in \Z^{\neq 0}$, we consider the frequency matrix $\Aa$ for the
  characteristic splitting introduced in \eqref{eq:splitting3dA}.
  One potential advantage of the characteristic splitting is that one can compute the eigenvalues explicitly, as all the matrices commute: 
  \begin{lemma} 
    The real part of the eigenvalues $\{\eigAk_{k,0}, \eigAk_{k,\pm}\}$ of $\Aa$ are given by 

  \begin{subequations}
  \begin{align}
  \label{eq:3dl1}
  \Real(\eigAk_{k,0})     &= -2\pi^2 k^2 \dx (\widehat \alpha + \widetilde \alpha) + 2 \pi^2 k^2 \dt a^2 \\
  \Real(\eigAk_{k,\pm}) &= \frac{-4 \pi^2 k^2 \dt}{\eps^2} + \frac{8 \dt k^2 \pi^2}{\eps}  \label{eq:re_l22}\\
		       & \hphantom{= } + 2 \pi^2 k^2 a^2 \dt - 2 \pi^2 k^2 \dx(\widehat \alpha + \widetilde \alpha) \pm 4 \sqrt{2} a \pi^2 k^2 \dt. \notag
  \end{align}
  \end{subequations}
  \end{lemma}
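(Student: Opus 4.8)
The plan is to exploit the one structural fact that makes a characteristic splitting special: \emph{all} matrices entering the frequency matrix \eqref{eq:aa2} are simultaneously diagonalized by the \emph{same} transformation $Q$ from \eqref{eq:charsplittingA}. Consequently $\Aa$ is, up to conjugation by $Q$, a diagonal matrix, and its spectrum can be read off entrywise. So the proof is essentially a conjugation step followed by a substitution of the explicit eigenvalues of $\widehat{\Lambda}$ and $\widetilde{\Lambda}$.

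Concretely, I would first use $A = Q\Lambda Q^{-1}$ together with \eqref{eq:splitting3dA} (so that $\Ahat^2 = Q\widehat{\Lambda}^2 Q^{-1}$ and $\Atilde^2 = Q\widetilde{\Lambda}^2 Q^{-1}$) to rewrite \eqref{eq:aa2} as
\[
 \Aa = Q\left(-i2\pi k\Lambda - 2\pi^2 k^2\dt\left(\tfrac{(\widehat\alpha+\widetilde\alpha)\dx}{\dt}\Id - \widehat{\Lambda}^2 + \widetilde{\Lambda}^2\right)\right)Q^{-1}.
\]
Since the bracketed matrix is diagonal and $Q$ is invertible, its diagonal entries are exactly the eigenvalues of $\Aa$; indexing the $i$-th entry by the matching triple $(\lambda_i,\widehat\lambda_i,\widetilde\lambda_i)$ of eigenvalues of $A$, $\Ahat$, $\Atilde$ gives
\[
 \eigAk_{k,i} = -i2\pi k\lambda_i - 2\pi^2 k^2\dt\left(\tfrac{(\widehat\alpha+\widetilde\alpha)\dx}{\dt} - \widehat \lambda_i^2 + \widetilde \lambda_i^2\right).
\]
As $\lambda_i,\widehat\lambda_i,\widetilde\lambda_i$ are real, the advective term $-i2\pi k\lambda_i$ is purely imaginary, and taking real parts yields
\[
 \Real(\eigAk_{k,i}) = -2\pi^2 k^2\dx(\widehat\alpha+\widetilde\alpha) + 2\pi^2 k^2\dt\,(\widehat \lambda_i^2 - \widetilde \lambda_i^2),
\]
which is precisely the identity already derived in the general case above.

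It then remains to substitute the explicit spectra. For the central mode, $\widehat\lambda_0 = a$ and $\widetilde\lambda_0 = 0$, so $\widehat\lambda_0^2 - \widetilde\lambda_0^2 = a^2$, giving \eqref{eq:3dl1}. For the two outer modes, $\widehat\lambda_\pm = a\pm\sqrt{2}$ and $\widetilde\lambda_\pm = \pm\sqrt{2}(1-\eps)/\eps$; expanding
\[
 \widehat\lambda_\pm^2 = a^2 \pm 2\sqrt{2}\,a + 2, \qquad \widetilde\lambda_\pm^2 = \frac{2(1-\eps)^2}{\eps^2} = \frac{2}{\eps^2} - \frac{4}{\eps} + 2,
\]
the constant $2$ cancels and one is left with $\widehat\lambda_\pm^2 - \widetilde\lambda_\pm^2 = a^2 \pm 2\sqrt{2}\,a - 2/\eps^2 + 4/\eps$; multiplying by $2\pi^2 k^2\dt$ and subtracting the viscous term $2\pi^2 k^2\dx(\widehat\alpha+\widetilde\alpha)$ reproduces \eqref{eq:re_l22} term by term.

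I do not expect a genuine obstacle here. The only nontrivial ingredient is the (routine) fact that a polynomial in simultaneously diagonalizable matrices acts on the joint eigenvalues, which is legitimate because the prototype $A$ has three distinct eigenvalues and hence a genuine eigenbasis, making $Q$ invertible. The one place that demands care is the final substitution: keeping the $\pm$ sign of $\widehat\lambda_\pm$ synchronized with that of $\widetilde\lambda_\pm$, and checking that the $O(1)$ parts of the two squares cancel so that the stiff growth is governed purely by the $-2/\eps^2$ and $+4/\eps$ contributions.
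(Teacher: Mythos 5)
Your proposal is correct and follows essentially the same route as the paper: conjugating \eqref{eq:aa2} by $Q$ to reduce $\Aa$ to a diagonal matrix and reading off the real parts of its entries, with the only cosmetic difference that you substitute $\widehat\Lambda^2-\widetilde\Lambda^2$ where the paper writes the equivalent $(\widehat\Lambda-\widetilde\Lambda)\Lambda$. Your explicit check that the $O(1)$ constants $+2$ cancel between $\widehat\lambda_\pm^2$ and $\widetilde\lambda_\pm^2$ reproduces the stated formulae exactly.
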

  \begin{proof}
     With the notation introduced in \eqref{eq:charsplittingA} and \eqref{eq:splitting3dA}, see also \eqref{eq:aa2}, there holds
     \begin{align*}
      \mathcal{A}_k & = -i2\pi k Q \Lambda Q^{-1} -2\pi^2 k^2 \dt \left(\frac{\left(\widehat \alpha + \widetilde \alpha\right) \dx}{\dt} \Id - Q (\widehat \Lambda - \widetilde \Lambda) \Lambda Q^{-1}   \right)  \\
		    & = Q \left(-i2\pi k \Lambda  -2\pi^2 k^2 \dt \left(\frac{\left(\widehat \alpha + \widetilde \alpha\right) \dx}{\dt} \Id - (\widehat \Lambda - \widetilde \Lambda ) \Lambda  \right) \right) Q^{-1} \\
		    & = Q \diag(\sigma) Q^{-1} ,
     \end{align*}
     where the vector $\sigma$ is given by 
     \begin{align*}
      \sigma := \left( 
		      \begin{matrix} 
			-i 2 \pi k (a - \frac{\sqrt{2}}{\eps}) - 2 \pi^2 k^2 \dt \left(\frac{\left(\widehat \alpha + \widetilde \alpha\right) \dx}{\dt} + \frac{2}{\eps^2} - \frac{4}{\eps} +2 \sqrt{2} a - a^2 \right)  \\
			-i 2 \pi k a - 2 \pi^2 k^2 \dt \left(\frac{\left(\widehat \alpha + \widetilde \alpha\right) \dx}{\dt} - a^2\right)  \\
			-i 2 \pi k (a + \frac{\sqrt{2}}{\eps}) - 2 \pi^2 k^2 \dt \left(\frac{\left(\widehat \alpha + \widetilde \alpha\right) \dx}{\dt} + \frac{2}{\eps^2} - \frac{4}{\eps} -2 \sqrt{2} a - a^2 \right)  \\
		      \end{matrix}\right) 
     \end{align*}
     Thus, one can conclude that the eigenvalues of $\mathcal{A}_k$ are given in $\sigma$. Sorting the eigenvalues conveniently, starting with the one in the middle, one can conclude that their Real parts are given by formulae \eqref{eq:3dl1} and \eqref{eq:re_l22}.\qed
    \end{proof}

 The problem under consideration has two asymptotics, namely the one associated to
 $\eps \rightarrow 0$, and the other one associated to $\dt \rightarrow 0$
 (which automatically includes $\dx \rightarrow 0$).
 We immediately obtain the following condition for the negativity of the first eigenvalue:
 \begin{lemma}\label{lemma:lambda1}
  $\Real(\eigAk_{k,0}) < 0$  under the $\cfl$ condition
  \begin{align}
    \cflhat < \cfli := \frac{\widehat \alpha + \widetilde \alpha}{a}.
    \label{eq:cfli}
  \end{align}
 \end{lemma}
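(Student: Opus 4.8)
The plan is to read off the sign of $\Real(\eigAk_{k,0})$ directly from the explicit formula \eqref{eq:3dl1}, which is already linear in the two "small" parameters. First I would factor the common positive prefactor out of
\[
\Real(\eigAk_{k,0}) = -2\pi^2 k^2 \dx (\widehat \alpha + \widetilde \alpha) + 2 \pi^2 k^2 \dt\, a^2
= 2\pi^2 k^2 \dx \left( \frac{\dt}{\dx}\, a^2 - (\widehat \alpha + \widetilde \alpha) \right).
\]
Since we only consider $\ifourier \in \Z^{\neq 0}$ (the case $k=0$ being excluded because $\mathcal{A}_0 = 0$), the prefactor $2\pi^2 k^2 \dx$ is strictly positive, so the sign of $\Real(\eigAk_{k,0})$ is the sign of the bracket.

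Next I would rewrite the bracket in terms of the advective $\cfl$ number. Using the definition $\cflhat = \frac{a\dt}{\dx}$ from \eqref{eq:widehatnu}, we have $\frac{\dt}{\dx} a^2 = a\, \cflhat$, hence
\[
\Real(\eigAk_{k,0}) = 2\pi^2 k^2 \dx\, a \left( \cflhat - \frac{\widehat \alpha + \widetilde \alpha}{a} \right)
= 2\pi^2 k^2 \dx\, a \left( \cflhat - \cfli \right).
\]
Since $a > 0$ by assumption (recall $a$ was taken positive so that $\lmax = a + \sqrt2/\eps$), the whole prefactor $2\pi^2 k^2 \dx\, a$ is strictly positive, and therefore $\Real(\eigAk_{k,0}) < 0$ if and only if $\cflhat < \cfli$. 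This is exactly the claimed condition \eqref{eq:cfli}, which completes the argument.

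There is essentially no obstacle here: the eigenvalue formula \eqref{eq:3dl1} does all the work, and the remaining step is just factoring and substituting the definition of $\cflhat$. The only things to be careful about are (i) explicitly invoking $k \neq 0$ to guarantee the prefactor does not vanish, and (ii) using positivity of $a$ so that dividing through by $a$ preserves the inequality — both of which are already in force from the standing assumptions. (One could also note in passing that the condition is sharp: for $\cflhat = \cfli$ one gets $\Real(\eigAk_{k,0}) = 0$, so $\cfli$ is precisely the threshold for this mode.)
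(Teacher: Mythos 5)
Your proposal is correct and follows exactly the route the paper takes (the paper simply states that the condition follows "immediately" from \eqref{eq:3dl1}): factor out the positive prefactor $2\pi^2 k^2 \dx$ for $k\neq 0$, substitute $\cflhat = a\dt/\dx$, and use $a>0$ to read off the equivalence $\Real(\eigAk_{k,0})<0 \Leftrightarrow \cflhat < \cfli$. Your added observation that the threshold is sharp is a correct, if minor, bonus.
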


 \begin{remark}
  Condition \eqref{eq:cfli} is a condition for the nonstiff $\cfl$ number
  $\cflhat$ from \eqref{eq:widehatnu}. It depends on both $\widehat \alpha$ and $\widetilde \alpha$.
  The coefficient $\widehat \alpha$ encodes the upwind viscosity
  of the explicit numerical flux \eqref{eq:HHn}, and is usually chosen as
  $a + \sqrt{2}$, the largest eigenvalue of the nonstiff matrix.
  There is more freedom to choose the viscosity  coefficient of the implicit
  numerical flux \eqref{eq:HHs}, and limiting choices are either
  $\widetilde  \alpha = \frac{\sqrt{2} (1-\eps)}{\eps}$ (the largest eigenvalue
  of the nonstiff matrix) or $\widetilde \alpha = 0$.
  In both cases, $\widehat \alpha + \widetilde \alpha
  \geq a+\sqrt2$, which gives the sufficient stability
  condition
  \begin{align*}
    \cflhat < \frac{a+\sqrt2}{a}.
  \end{align*}
  This is independent of $\eps$.
 \end{remark}

 Now we discuss $\Real(\eigAk_{k,\pm})$. Obviously, for $\dt$ fixed and $\eps
\rightarrow 0$, $\Real(\eigAk_{k, \pm}) < 0$, which directly yields the following
Proposition:

 \begin{proposition}
  Let $\dt$ be fixed. Then, there exists an $\eps_0 > 0$, such that for all
  $\eps < \eps_0$ and all $\ifourier \in \Z^{\neq 0}$, $\Real(\eigAk_{k,\pm}) < 0$.
 \end{proposition}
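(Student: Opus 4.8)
The plan is to read off the claim directly from the explicit formula \eqref{eq:re_l22} for $\Real(\eigAk_{k,\pm})$. Fixing $\dt$ (and hence $\dx$, since $O(\dx)=O(\dt)$ means $\dx = c\,\dt$ for a fixed proportionality constant $c$), I would group the right-hand side of \eqref{eq:re_l22} according to powers of $1/\eps$:
\begin{align*}
 \Real(\eigAk_{k,\pm}) = -\frac{4\pi^2 k^2 \dt}{\eps^2} + \frac{8\pi^2 k^2 \dt}{\eps} + C_\pm,
\end{align*}
where $C_\pm := 2\pi^2 k^2 a^2 \dt - 2\pi^2 k^2 \dx(\widehat\alpha+\widetilde\alpha) \pm 4\sqrt2\, a\,\pi^2 k^2\dt$ collects all the $\eps$-independent terms. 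The dominant term as $\eps\to 0$ is the $-4\pi^2 k^2\dt/\eps^2$ contribution, which is strictly negative for every $k\neq 0$.

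First I would factor out $4\pi^2 k^2 \dt > 0$ to reduce the sign question to the scalar inequality
\begin{align*}
 -\frac{1}{\eps^2} + \frac{2}{\eps} + \frac{C_\pm}{4\pi^2 k^2 \dt} < 0,
\end{align*}
i.e. (multiplying by $\eps^2>0$) to a quadratic in $1/\eps$, or equivalently a quadratic $g_\pm(\eps) := 1 - 2\eps - \tfrac{C_\pm}{4\pi^2 k^2\dt}\eps^2 > 0$ in $\eps$. Since $g_\pm(0)=1>0$ and $g_\pm$ is continuous, there is an interval $(−\delta_\pm,\delta_\pm)$ on which it stays positive; intersecting with $(0,1]$ gives a threshold for $\eps$. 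The one subtlety is that $C_\pm$, and hence the admissible threshold, depends on $k$: I would therefore note that $C_\pm/(4\pi^2 k^2\dt) = \tfrac{a^2}{2} - \tfrac{\dx}{\dt}\cdot\tfrac{\widehat\alpha+\widetilde\alpha}{2} \pm \sqrt2\,a$ is in fact \emph{independent of $k$}, so a single $\eps_0>0$ works uniformly in $k\in\Z^{\neq 0}$. (Even without this cancellation one could argue more crudely: for $\eps$ small the negative $\eps^{-2}$ term beats the others since $|C_\pm|$ is bounded by a $k^2$-multiple of a fixed constant, so the $k^2$ cancels throughout.)

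Concretely, I would take $\eps_0 := \min(1, \delta_+, \delta_-)$ where $\delta_\pm$ is the smaller positive root of $g_\pm$ (or $\delta_\pm := 1$ if $g_\pm$ has no positive root, e.g. when $C_\pm \le 0$), and then for all $\eps < \eps_0$ and all $k \neq 0$ we get $g_\pm(\eps) > 0$, hence $\Real(\eigAk_{k,\pm}) < 0$. I do not anticipate a genuine obstacle here: the statement is essentially an observation that a degree-two polynomial in $1/\eps$ with a strictly negative leading coefficient is eventually negative, and the only thing worth checking carefully is the uniformity in $k$, which the explicit formula delivers for free because every term of \eqref{eq:re_l22} carries a common factor $k^2$.
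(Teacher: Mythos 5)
Your argument is correct and is exactly the paper's reasoning made explicit: the paper states the Proposition as an immediate consequence of \eqref{eq:re_l22}, observing that for fixed $\dt$ the term $-4\pi^2k^2\dt/\eps^2$ dominates as $\eps\to 0$, and gives no further proof. Your write-up supplies the missing details correctly, and you rightly identify the only point that actually needs checking, namely uniformity in $k$, which follows from the common factor $\pi^2 k^2$ in every term of \eqref{eq:re_l22}.
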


 However, this is not the full asymptotics. We therefore change the point of view:
 Given a \emph{fixed} $\eps$, for which $\dt$ is  $\Real(\eigAk_{k,\pm}) < 0$?
 The following lemma provides the crucial estimate:
 
  \begin{lemma}\label{lemma:lambda23}
  We define 
  \begin{align}
   \label{eq:varphi}
   \varphi(a) := {\frac {\sqrt{2}}{a+2\,\sqrt {2} }}. 
  \end{align}
  Now, consider two cases as follows:
  \begin{enumerate}
   \item Let $\eps \leq \varphi(a)$. Then, $\Real(\eigAk_{k,\pm}) < 0$ holds unconditionally. 
   \item Let $\varphi(a) < \eps$. Then, $\Real(\eigAk_{k,\pm}) < 0$ holds for 
   \begin{align*}
    \widehat \nu \leq \frac{(\widehat \alpha + \widetilde \alpha)a}{(a + \sqrt{2})^2}. 
   \end{align*}
  \end{enumerate}
  \end{lemma}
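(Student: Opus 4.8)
The plan is to analyze the sign of the expression \eqref{eq:re_l22} for $\Real(\eigAk_{k,\pm})$ directly. Since every term in \eqref{eq:re_l22} carries a common positive factor $2\pi^2 k^2 \dt$, dividing through by it reduces the question to the sign of
\begin{align*}
 g_\pm := -\frac{2}{\eps^2} + \frac{4}{\eps} + a^2 - \frac{(\widehat\alpha+\widetilde\alpha)\dx}{\dt} \pm 2\sqrt{2}\,a,
\end{align*}
which no longer depends on $k$; this immediately explains why all nonzero frequencies are governed by the same condition. Writing things in terms of the advective CFL number $\cflhat = a\dt/\dx$ from \eqref{eq:widehatnu}, we have $\dx/\dt = a/\cflhat$, so the viscosity term becomes $(\widehat\alpha+\widetilde\alpha)a/\cflhat$. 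Thus $\Real(\eigAk_{k,\pm})<0$ is equivalent to
\begin{align*}
 \frac{(\widehat\alpha+\widetilde\alpha)a}{\cflhat} > -\frac{2}{\eps^2} + \frac{4}{\eps} + a^2 \pm 2\sqrt{2}\,a.
\end{align*}

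First I would handle case (1). Here the idea is that for $\eps$ small enough the $\eps$-dependent part $-2/\eps^2 + 4/\eps$ is already so negative that the right-hand side is $\le 0$ regardless of $\cflhat$, making the inequality automatic since the left-hand side is positive. Completing the square or simply bounding $-2/\eps^2+4/\eps \le 2$ is not tight enough; instead I would require $-2/\eps^2 + 4/\eps + a^2 + 2\sqrt{2}a \le 0$ (the worse sign is $+$), i.e. $2/\eps^2 - 4/\eps \ge (a+\sqrt{2})^2 - 2 = a^2 + 2\sqrt{2}a$. Solving this quadratic inequality in $1/\eps$ should yield exactly the threshold $\eps \le \varphi(a) = \sqrt{2}/(a+2\sqrt{2})$: indeed $2/\eps^2 - 4/\eps - (a^2+2\sqrt2 a) \ge 0$ has larger root $1/\eps = 1 + \sqrt{1 + (a^2+2\sqrt2 a)/2} = 1 + (a+2\sqrt2)/\sqrt2$ after simplifying the discriminant, which inverts to $\varphi(a)$. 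So the main content of case (1) is this discriminant computation.

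For case (2), when $\eps > \varphi(a)$ the right-hand side can be positive, so a genuine restriction on $\cflhat$ is needed. The claimed bound $\cflhat \le (\widehat\alpha+\widetilde\alpha)a/(a+\sqrt{2})^2$ corresponds to replacing the right-hand side $-2/\eps^2 + 4/\eps + a^2 \pm 2\sqrt2 a$ by its uniform (over $\eps \in (0,1]$) upper bound. The maximum of $-2/\eps^2 + 4/\eps$ on $(0,1]$ is attained at $\eps = 1$ with value $2$, and taking the $+$ sign gives $2 + a^2 + 2\sqrt2 a = (a+\sqrt2)^2$. Hence demanding $(\widehat\alpha+\widetilde\alpha)a/\cflhat \ge (a+\sqrt2)^2$, i.e. $\cflhat \le (\widehat\alpha+\widetilde\alpha)a/(a+\sqrt2)^2$, suffices. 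The one subtlety to check is that this bound is used only in the regime $\eps > \varphi(a)$, but since it is $\eps$-uniform it holds there a fortiori; I would just remark that the two cases together cover all $\eps \in (0,1]$.

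The main obstacle is the bookkeeping in case (1): getting the discriminant of the quadratic $2/\eps^2 - 4/\eps - (a^2 + 2\sqrt2 a)$ to collapse cleanly so that the critical $\eps$ is precisely $\varphi(a) = \sqrt2/(a+2\sqrt2)$ rather than some messier expression. I expect the identity $(a+2\sqrt2)^2/2 = 1 + (a^2+2\sqrt2 a)/2$ — equivalently $(a+2\sqrt2)^2 = 2 + a^2 + 2\sqrt2 a$, which is just $a^2 + 4\sqrt2 a + 8 = a^2 + 2\sqrt2 a + 2$... wait, that is false, so the simplification must instead route through $4\sqrt2 a + 8$ versus $2\sqrt2 a + 2$; I would recheck the algebra carefully, as the correct manipulation is to note $1 + (a+\sqrt2)^2/2 = (a^2 + 2\sqrt2 a + 4)/2$ and verify its square root matches, which is the delicate point. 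Everything else is elementary sign-chasing once \eqref{eq:re_l22} is in hand.
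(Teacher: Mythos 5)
Your strategy coincides with the paper's own proof: divide out the common positive factor $2\pi^2k^2\dt$, keep only the worse sign $+2\sqrt{2}\,a$, substitute $\dx/\dt=a/\widehat\nu$, settle case (1) by locating the positive root of the resulting quadratic in $\eps$ (equivalently $1/\eps$), and settle case (2) by the uniform bound $-2/\eps^2+4/\eps\le 2$ on $(0,1]$ together with $2+a^2+2\sqrt{2}\,a=(a+\sqrt{2})^2$. The one point you flagged does close cleanly: the discriminant of $2y^2-4y-(a^2+2\sqrt{2}\,a)$ is $16+8(a^2+2\sqrt{2}\,a)=8(a+\sqrt{2})^2$, so the larger root is $y=1+(a+\sqrt{2})/\sqrt{2}=(a+2\sqrt{2})/\sqrt{2}$ (not $1+(a+2\sqrt{2})/\sqrt{2}$), whose reciprocal is exactly $\varphi(a)$.
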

 \begin{proof}
  For $\Real(\mu_{k,\pm})$ to be negative, it suffices to show that 
  \begin{align*}
   0 > \frac{-4 \dt}{\eps^2} + \frac{8 \dt }{\eps} + 2 a^2 \dt - 2 \dx(\widehat \alpha + \widetilde \alpha) + 4 \sqrt{2} a \dt.
  \end{align*}
  We substitute $\dx = \frac{a \dt}{\widehat \nu}$ and obtain 
  \begin{align}
   0 &> \frac{-4 }{\eps^2} + \frac{8}{\eps} + 2 a^2 - \frac{2 (\widehat \alpha + \widetilde \alpha)a}{\widehat \nu} + 4 \sqrt{2} a \notag \\
   \Leftrightarrow \frac{(\widehat \alpha + \widetilde \alpha)a}{\widehat \nu} &> \frac{-2}{\eps^2} + \frac{4}{\eps} + a^2  + 2 \sqrt{2} a. \label{eq:proof_ineq}
  \end{align}
  \eqref{eq:proof_ineq} is trivially fulfilled, if the right-hand side is not positive, i.e., if
  \begin{align*}
   0 &\geq \frac{-2}{\eps^2} + \frac{4}{\eps} + a^2  + 2 \sqrt{2} a \\ 
   \Leftrightarrow 0 &\geq -2 + 4 \eps + \eps^2 \left(a^2  + 2 \sqrt{2} a \right) \\
   \Leftrightarrow 0 &\leq \eps \leq \varphi(a). 
  \end{align*}
  This proves the first claim that for all $\eps \leq \varphi(a)$, both $\Real(\eigAk_{k,\pm})$ are negative. 
  
  Now let $\varphi(a) < \eps$. In this case, the right-hand side of \eqref{eq:proof_ineq} is positive, and therefore one has a restriction on $\widehat \nu$. One can compute 
  \begin{align}
		      \frac{(\widehat \alpha + \widetilde \alpha)a}{\widehat \nu} &> \frac{-2}{\eps^2} + \frac{4}{\eps} + a^2  + 2 \sqrt{2} a \notag \\
   \Leftrightarrow	\frac{\widehat \nu}{(\widehat \alpha + \widetilde \alpha) a} &< \frac{1}{\frac{-2 + 4\eps}{\eps^2} + a^2  + 2 \sqrt{2} a} \label{eq:proof_ineq2}. 
  \end{align}
  Note that for any $0 \leq \eps \leq 1$, there holds $\frac{-2+4\eps}{\eps^2} \leq 2$. Consequently, \eqref{eq:proof_ineq2} is fulfilled if 
  \begin{align*}
   \frac{\widehat \nu}{(\widehat \alpha + \widetilde \alpha)a} \leq \frac{1}{2 +  a^2  + 2 \sqrt{2} a} = \frac{1}{(a + \sqrt{2})^2}
  \end{align*}
  This proves the lemma.   \qed
 \end{proof}

With Lemmas \ref{lemma:lambda1} and
\ref{lemma:lambda23} we obtain the following

 \begin{proposition}\label{theorem:summary}
  Recall the definition of $\cfli$ from \eqref{eq:cfli}. 
  For $\ifourier\in \Z^{\neq 0}$, $\Real(\eigAk_{k,0})<0$ and $\Real(\eigAk_{k,\pm}) < 0$ if
  \begin{align}
   \label{eq:cflii}
   \cflhat < \cflii := \cfli \, \psi(\eps,a)
  \end{align}
  with
  \begin{align}
   \label{eq:psi}
   \psi(\eps,a) :=
     \begin{cases} 
      1, &\quad \eps \leq \varphi(a) \\
      \left( \frac{a}{a+\sqrt{2}} \right)^2 &\quad \eps > \varphi(a).
     \end{cases}
  \end{align}
  \end{proposition}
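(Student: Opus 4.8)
The plan is to obtain the proposition directly from Lemmas~\ref{lemma:lambda1} and~\ref{lemma:lambda23} by splitting into the two regimes $\eps\leq\varphi(a)$ and $\eps>\varphi(a)$ that already appear in the definition~\eqref{eq:psi} of $\psi$. In each regime I would check that the single CFL bound $\cflhat<\cflii$ is strong enough to force \emph{all three} real parts $\Real(\eigAk_{k,0})$ and $\Real(\eigAk_{k,\pm})$ to be negative simultaneously, for every $\ifourier\in\Z^{\neq 0}$ (the case $\ifourier=0$ is excluded in the statement, consistent with $\mathcal{A}_0=0$).

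If $\eps\leq\varphi(a)$, then $\psi(\eps,a)=1$ by~\eqref{eq:psi}, so $\cflii=\cfli$, and $\cflhat<\cflii=\cfli$ gives $\Real(\eigAk_{k,0})<0$ by Lemma~\ref{lemma:lambda1}, while the first case of Lemma~\ref{lemma:lambda23} gives $\Real(\eigAk_{k,\pm})<0$ with no further restriction. If instead $\eps>\varphi(a)$, then $\psi(\eps,a)=\bigl(a/(a+\sqrt2)\bigr)^2\in(0,1)$, so by~\eqref{eq:cfli}
\begin{align*}
  \cflii \;=\; \cfli\,\psi(\eps,a) \;=\; \frac{\widehat\alpha+\widetilde\alpha}{a}\cdot\frac{a^2}{(a+\sqrt2)^2} \;=\; \frac{(\widehat\alpha+\widetilde\alpha)\,a}{(a+\sqrt2)^2}.
\end{align*}
Here $\psi(\eps,a)<1$ forces $\cflii<\cfli$, so $\cflhat<\cflii$ already implies $\cflhat<\cfli$ and Lemma~\ref{lemma:lambda1} again yields $\Real(\eigAk_{k,0})<0$; and $\cflhat<\cflii$ is precisely the bound on $\cflhat$ appearing in the second case of Lemma~\ref{lemma:lambda23}, which therefore yields $\Real(\eigAk_{k,\pm})<0$. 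Collecting the two regimes finishes the proof.

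I do not expect any genuine obstacle: all the real work has already been done in Lemmas~\ref{lemma:lambda1} and~\ref{lemma:lambda23}. The only point that deserves a moment's attention is that in the regime $\eps>\varphi(a)$ the sharper bound $\cflii$ automatically dominates the weaker bound $\cfli$ needed to control the middle eigenvalue $\eigAk_{k,0}$, so that a single, piecewise-defined CFL condition controls the whole spectrum of $\Aa$ at once.
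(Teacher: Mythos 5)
Your proposal is correct and follows exactly the paper's route: the paper derives Proposition~\ref{theorem:summary} directly by combining Lemmas~\ref{lemma:lambda1} and~\ref{lemma:lambda23}, and your case split on $\eps\leq\varphi(a)$ versus $\eps>\varphi(a)$, together with the observation that $\cflii\leq\cfli$ so the single bound controls $\eigAk_{k,0}$ as well, is precisely the intended argument.
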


\bigskip

From the previous considerations, we can conclude:

\begin{corollary}
  We choose $\widehat \alpha = a + \sqrt{2}$. 
Then, there holds $\Real(\eigAk_{k,i}) < 0$ for all $\ifourier \in \Z^{\neq 0}$, $\eigAk_{k,i}$ eigenvalue to $\Aa$, if 
  \begin{align}
  \label{eq:regcfl}
   \frac{(a + \sqrt{2}) \dt}{\dx} < 1. 
  \end{align}
    \end{corollary}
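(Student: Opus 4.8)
The plan is to read off the claim from Proposition~\ref{theorem:summary}, so that all that remains is a short comparison of $\cfl$ numbers once the choice $\widehat\alpha=a+\sqrt2$ is made. First I would rewrite the hypothesis in the variable that appears in Proposition~\ref{theorem:summary}: recalling $\cflhat=\frac{a\dt}{\dx}$ from \eqref{eq:widehatnu}, the assumption $\frac{(a+\sqrt2)\dt}{\dx}<1$ is precisely
\begin{align*}
 \cflhat < \frac{a}{a+\sqrt2}.
\end{align*}
It therefore suffices to show that, with $\widehat\alpha=a+\sqrt2$, one has $\frac{a}{a+\sqrt2}\le\cflii$, where $\cflii=\cfli\,\psi(\eps,a)$ as in \eqref{eq:cflii}--\eqref{eq:psi}.

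Next I would bound $\cflii$ from below, distinguishing the two branches of $\psi$. Substituting $\widehat\alpha=a+\sqrt2$ into $\cfli=\frac{\widehat\alpha+\widetilde\alpha}{a}$ and using $\widetilde\alpha\ge0$ gives $\cfli\ge\frac{a+\sqrt2}{a}$. If $\eps\le\varphi(a)$, then $\psi(\eps,a)=1$, so $\cflii=\cfli\ge\frac{a+\sqrt2}{a}>1>\frac{a}{a+\sqrt2}$. If $\eps>\varphi(a)$, then $\psi(\eps,a)=\left(\frac{a}{a+\sqrt2}\right)^2$, hence
\begin{align*}
 \cflii = \cfli\,\psi(\eps,a) \ge \frac{a+\sqrt2}{a}\cdot\frac{a^2}{(a+\sqrt2)^2} = \frac{a}{a+\sqrt2}.
\end{align*}
In both cases $\cflii\ge\frac{a}{a+\sqrt2}>\cflhat$, i.e.\ $\cflhat<\cflii$.

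Finally I would invoke Proposition~\ref{theorem:summary}: since $\cflhat<\cflii$, it yields $\Real(\eigAk_{k,0})<0$ and $\Real(\eigAk_{k,\pm})<0$ for every $\ifourier\in\Z^{\neq0}$, and $\{\eigAk_{k,0},\eigAk_{k,\pm}\}$ is the full spectrum of the $3\times3$ matrix $\Aa$, which establishes the corollary. There is no genuine obstacle here; the only point that needs care is the second branch of $\psi$, where one must verify that the factor $\left(\frac{a}{a+\sqrt2}\right)^2$ is exactly absorbed by the choice $\widehat\alpha=a+\sqrt2$ (together with the nonnegativity of $\widetilde\alpha$), so that the resulting bound $\frac{a}{a+\sqrt2}$ is $\eps$-independent and matches the stated $\cfl$ condition.
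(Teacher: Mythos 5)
Your proposal is correct and follows essentially the same route as the paper: both reduce the claim to Proposition~\ref{theorem:summary} by showing that \eqref{eq:regcfl}, i.e.\ $\cflhat<\frac{a}{a+\sqrt2}$, implies $\cflhat<\cflii$ in each branch of $\psi$, using $\widehat\alpha=a+\sqrt2$ and $\widetilde\alpha\ge0$. Your write-up merely spells out the $\eps\le\varphi(a)$ case that the paper leaves as "similarly".
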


  \begin{proof}
   Consider expression \eqref{eq:cflii} and plug in the definition of both $\psi$ from \eqref{eq:psi} and $\nu_1$ from \eqref{eq:cfli}. We consider case $\eps > \varphi(a)$ first, and obtain
   \begin{align*}
   \left(\frac{\widehat \alpha + \widetilde \alpha}{a} \right) \left( \frac{a}{a+\sqrt{2}} \right)^2 
   & \geq \frac{(a + \sqrt{2})a}{(a+\sqrt{2})^2}\geq \frac{a}{a + \sqrt{2}} . 
   \end{align*}
   Ergo,   $\widehat \nu < \frac{a}{a + \sqrt{2}}$
   is sufficient for \eqref{eq:cflii}, which implies \eqref{eq:regcfl}. Similarly, for $\eps \leq \varphi(a)$, one can easily show that $\widehat \nu < \nu_1$ is fulfilled given that \eqref{eq:regcfl} holds.\qed
  \end{proof}



\begin{figure}[h]
 \begin{center}
   {\includegraphics{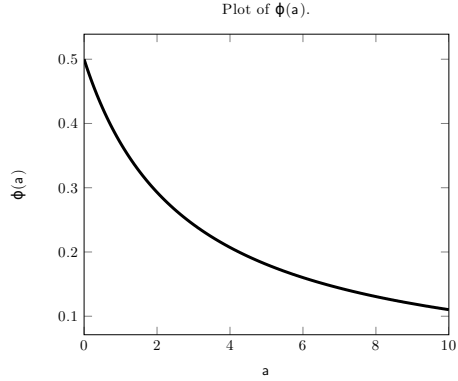}}
   \caption{Plot of function $\varphi$ from \eqref{eq:varphi}.}\label{fig:varphi} \end{center}
\end{figure}

 \begin{remark}
 \begin{itemize}
  \item The function $\varphi(a) := \frac{\sqrt{2}}{a + 2 \sqrt{2}}$, see \eqref{eq:varphi},  has been plotted in Figure~\ref{fig:varphi}. 
  Consider the stability constraint \eqref{eq:cflii}, together with the definition of $\psi(\eps, a)$ in \eqref{eq:psi}. 
  From Figure \ref{fig:varphi}, one can tell that $\eps \leq \varphi(a)$ in \eqref{eq:psi} is met for a sufficiently small $\eps$. This then again implies that for 'small' $\eps$, one has  stability that only depends on the convective $\cfl$ number $\nu_1$, see \eqref{eq:cfli}, as $\psi(\eps) = 1$ for this case. This is an impressive result in the sense that the only stability restriction for small $\eps$ depends on the slow waves. 
  \item In Figure \ref{fig:char_cfl}, we plotted the numerically determined maximum allowable $\widehat \nu_{num}$ values such that the real parts of the eigenvalues of $\Aa$ are negative. For the particular computations, we choose $a = 2, \dx = 10^{-2}$, $\dt = \frac{\widehat \nu_{num} \dx}{a }$, $\widehat \alpha = 2 + \sqrt{2}$, $\widetilde \alpha = 0$ or $\widetilde \alpha= \frac{\sqrt{2}(1-\eps)}{\eps}$ and determine the maximum $\widehat \nu_{num}$, such that $\Real(\eigAk_{k,0}) < 0$ and $\Real(\eigAk_{k,\pm}) < 0$
  One can infer from this figure that the stability restriction one has to impose on the ratio $\frac{\dt}{\dx}$ can be made independent of $\eps$. 
  Note that if there is no known explicit formula for the eigenvalues of $\Aa$, one could still investigate linearized splittings of, e.g., the Euler equations by simply computing all $\Aa$ up to a given value of $k$, getting a first glimpse of possible (in)stabilities in the splitting.
 \end{itemize}
 \end{remark}

\begin{figure}[h]
 \begin{center}
   {\includegraphics[width=0.48\textwidth]{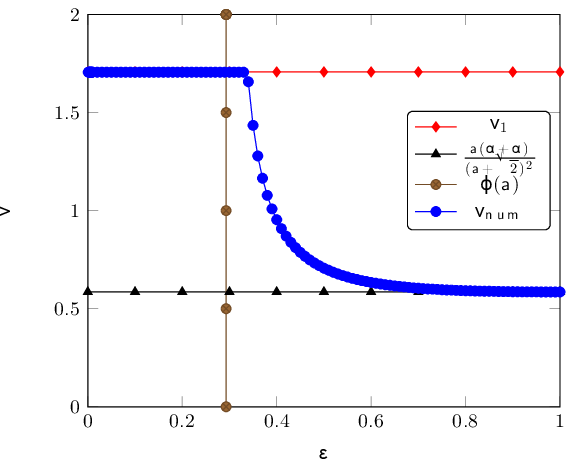}}
   {\includegraphics[width=0.48\textwidth]{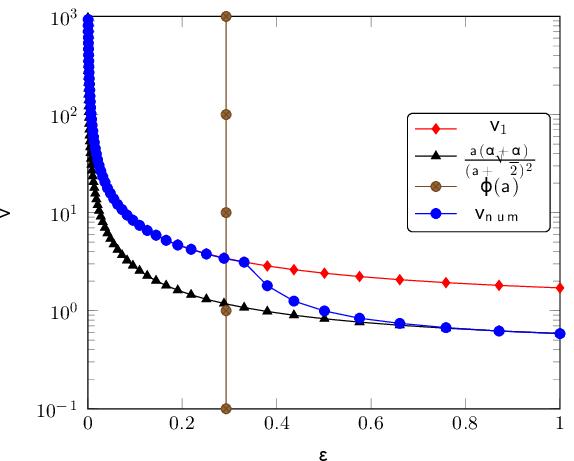}}
     \caption{Determined $\cfl$ number versus a-priori estimates. Left: $\widetilde\alpha =  0$, Right: $\widetilde \alpha = \frac{\sqrt{2}(1-\eps)}{\eps}$.}\label{fig:char_cfl}
 \end{center}
\end{figure}

\section{On The Non-Uniform Stability Of Some Splittings}\label{sec:nonuniformity}
In this section, we consider a splitting that does not induce a scheme that is uniformly stable. This means that there is no bound on 
$\cflhat$,
independently of $\eps$, such that $\Real(\eigAk_{k,i}) < 0$ holds for all $\ifourier \in \Z^{\neq 0}$ and all $\eps > 0$.

In particular, we consider the (non-characteristic) splitting of matrix $A$ from \eqref{eq:3dA} into 
  \begin{align*}
   \widehat A = \left( \begin{matrix} a & \,1\!-\!\eps\, & 0 \\ 1 & a & 1 \\ 0 & 1\!-\!\eps & a\end{matrix} \right), \qquad 
   \widetilde A = \left( \begin{matrix} 0 & \eps & 0 \\ \frac{1-\eps^2}{\eps^2} & \,0\, & \frac{1-\eps^2}{\eps^2} \\ 0 & \eps & 0 \end{matrix} \right). 
  \end{align*}
The eigenvalues of the splitting matrices are 
\begin{alignat*}{2}
 \widehat \lambda_1   &= a, \quad \widehat \lambda_{2,3}   &\ =\ & a \pm \sqrt{2-2\eps} \\
 \widetilde \lambda_1 &= 0, \quad \widetilde \lambda_{2,3} &\ =\ & \pm \frac{\sqrt{2\eps(1-\eps^2)}}{\eps}.
\end{alignat*}

Obviously, this only yields a hyperbolic splitting in the sense of Definition \ref{def:splitting} if we restrict ourselves to $\eps < 1$. (We are only interested in the case $\eps \rightarrow 0$, so this can be done without loss of generality, as it could also be circumvented by a reparametrization of $\eps$.)  For $\eps < 1$, the splitting is admissible. 

The frequency matrix $\Aa$ in this case has eigenvalues which can only be computed via an extremely tedious calculation, or with the aid of Maple. Expanded in terms of the asymptotic sequence $\{\eps^{-2}, \eps^{-1}, \ldots \}$, these eigenvalues are given by
\begin{align}
  \eigAk_{k,0}   &= 2 \pi^2 k^2 \dt a^2 -2 \pi^2 k^2 (\widehat \alpha + \widetilde \alpha) \dx - 2 \pi k i a  + O(\eps), \notag \\ 
  \eigAk_{k,+}   &= \frac{4\pi^2 k^2\dt }{\eps^2} - \frac{8\pi^2k^2 \dt}{\eps} + O(1), \qquad \eigAk_{k,-} = - \frac{4\pi^2 k^2\dt }{\eps^2} + O(1).
\label{eq:muk_pm}
\end{align}
Note that $\Real(\mu_{k,+}) < 0$ for $\eps \rightarrow 0$ can only hold if $\dt = O(\eps)$ (and so the asymptotic expansion is not valid anymore, because $O(1)$ refers to $O(1)$ with respect to $\eps$, not with respect to $\dt$). Consequently, a CFL condition independently of $\eps$ can \emph{not} hold, although we treat the 'fast' parts implicitly. 

\begin{remark}
\begin{itemize} 
 \item This result is in contrast to common belief that coupling two schemes that are individually stable, does indeed yield a stable scheme.
 \item In particular, recall the two forms of the frequency matrix $\Aa$ in \eqref{eq:aa} and \eqref{eq:aa2}. They differ by the commutator
\begin{align*}
  - 2 \pi^2 \ifourier^2 {\dt} \left(
  \widetilde A \widehat A - \widehat A \widetilde A \right),
\end{align*}
whose eigenvalues are
\begin{align*}
  0, \;\; \pm (4 \pi^2 \ifourier^2 {\dt})\frac{1-\eps-\eps^2}{\eps^2}
    \; =  \; \pm \frac{4 \pi^2 \ifourier^2 {\dt}}{\eps^2} + O(\eps^{-1}),
\end{align*}  
and this is precisely the leading order term of $\eigAk_{k,\pm}$ in \eqref{eq:muk_pm}.
This seems to indicate the importance of the commutator, and the difficulty to control its contribution to the frequency matrix.
 \item Our result is a consequence of the fact that for two matrices $A$ and $B$, there is no bound on the eigenvalues of $A\cdot B$ in terms of products of eigenvalues of $A$ and $B$. In our example, the eigenvalues of the commutator are asymptotically larger than the product of those of $\widehat A$ and $\widetilde A$. More precisely, 
\begin{align*}
  \frac{4 \pi^2 \ifourier^2 \dt}{\eps^2} \gg
  |\widehat \lambda_{2,3}| \, |\widetilde \lambda_{2,3}|
  = \frac{\sqrt{2}(a+\sqrt{2})}{\sqrt{\eps}} + O(\sqrt{\eps}).
 \end{align*}
 \item Once more, the characteristic splitting removes the commutator.  
\end{itemize}
\end{remark}

\section{On The Non-Uniform Stability Of Splittings For The Linearized Euler Equations}
In this section, we apply the theory developed earlier to the linearized Euler equations. 
\subsection{Problem statement and analysis}
From now on, we consider $\gamma$ to be the fixed constant $\gamma = 1.4$. Considering a particular simple state 
\begin{align}
 v_0 := (\rho_0, \rho_0 u_0, E_0) := (1, 1, 1),
\end{align}
and setting $A := f'(v_0)$, where $f$ is defined in \eqref{eq:f_euler}, we obtain the linearized system \eqref{eq:underlyingequation} with matrix
\begin{align*}
 A = 
 \left( \begin {array}{ccc} 0&1&0\\ \noalign{}-\frac32+\frac12\,{\it 
\gamma}&3-{\it \gamma}& \,{\frac {{\it \gamma}- 1}{{{\it \eps}}^{2}}}
\\ \noalign{} \,{\it \gamma}\,{{\it \eps}}^{2}- \,{{\it \eps
}}^{2}- \,{\it \gamma}&{\it \gamma}- \frac32\,{\it \gamma}\,{{\it \eps
}}^{2}+ \frac32\,{{\it \eps}}^{2}&{\it \gamma}\end {array} \right) = \frac 1 {5} \left( \begin{matrix} 
     0 & 5 & 0 \\ -4 & 8 & \frac{2}{\eps^2} \\ 2\eps^2-7 & 7-3\eps^2 &7
 \end{matrix}
     \right).
\end{align*}
Its eigenvalues are 
\begin{align}
 \lambda_1 &= 1 \\
 \lambda_{\pm} &= 1 \pm \frac{\sqrt{\gamma (\gamma-1)(1-\frac{\eps^2}{2})}}{\eps} = 1 \pm \frac{\sqrt{0.56(1-\frac{\eps^2}{2})}}{\eps}
\end{align}
and consequently, the associated system of conservation laws fits very nicely into our framework with two fast waves and one slow convective wave. 

We consider a splitting taken from literature \cite{ArNoLuMu12}, which is actually a modification of Klein's splitting \cite{Kl95}. 
On the nonlinear level, it is given by a splitting of the flux function $f(v)$ into the sum of 
  \begin{subequations}
  \begin{align}
   \widehat f(v)   &:= \left( \rho u, \rho u^2 + p,  u (E + \Pi) \right)^T, \\ 
   \widetilde f(v) &:= \left(0, \frac{1-\eps^2}{\eps^2} p, u (p - \Pi) \right)^T. 
  \end{align}
  \end{subequations}
  $\Pi$ is an auxiliary pressure function, and it is defined by 
  \begin{align}
   \Pi(x,t) := \eps^2 p(x,t) + (1-\eps^2) \overline p
  \end{align}
  for a constant value (with respect to space) of $\overline p$. We cannot completely mimic the nonlinear behavior, as this value is often chosen as the infimum of the pressure over the spatial domain. However, we can set it to the constant value of $\overline p = \frac 1 {5}$, which is the pressure for $\eps = 1$, and is the infimum for all $0 \leq \eps \leq 1$.
  Using this (arguably crude) choice, it is straightforward to linearize the splittings, and one obtains the non-stiff matrix
\begin{align}
\label{eq:AhlinEuler}
  \widehat A = \frac 1 {5}\left( \begin{matrix}
			0 & 5 & 0 \\
			-5 + \eps^2 & 10-2\eps^2 & 2 \\
			-6 -\eps^2 + 2\eps^4 & 6 + \eps^2 -3\eps^4 & 5+2\eps^2
		      \end{matrix}
		\right) 
  \end{align}
with eigenvalues 
\begin{align}
 \widehat \lambda_1 &= 1 \\
 \widehat \lambda_{2,3} &= 1 \pm \frac 1 {5} \sqrt{12 - 3\eps^2 - 2\eps^4}
\end{align}
and the corresponding stiff matrix
\begin{align}
 \label{eq:AslinEuler}
 \widetilde A = \frac 1 {5}\left( \begin{matrix}
                       0 & 0 & 0 \\
                       1-\eps^2 & -2 + 2\eps^2 &-\frac{2(\eps^2-1)}{\eps^2} \\
                       -1+3\eps^2-2\eps^4 & 1-4\eps^2 + 3\eps^4 & 2-2\eps^2
                       \end{matrix}
		\right)
\end{align}
with eigenvalues 
\begin{align}
 \widetilde \lambda_1     &= 0 \\
 \widetilde \lambda_{2,3} &= \pm \frac{(\eps^2-1)\sqrt{2-2\eps^2}}{5\eps}.
\end{align}
Using Maple, we can easily evaluate the eigenvalues $\mu$ of the frequency matrix $\Aa$ and approximately writing them as
\begin{align}
 \eigAk_{k,0} 	&= O(1)\\
 \eigAk_{\pm}	&\approx \frac{\left(-1.579136704 \pm 9.474820224 \right) k\dt}{\eps^2} + O(\eps^{-1}).
\end{align}
The same results as in Section \ref{sec:nonuniformity} holds, and $\Real(\eigAk_{k,+})<0$ can only hold for $\dt = O(\eps)$. 

\subsection{Numerical Results}

In this section, we substantiate the results from the previous subsection with suitable numerical experiments. The setup is as before on domain $\Omega = [0, 1]$, and we consider the linearized splitting defined by the matrices $\widehat A$ and $\widetilde A$ in \eqref{eq:AhlinEuler} and \eqref{eq:AslinEuler}, respectively. In addition, we consider a characteristic splitting, where $\widehat \Lambda$ is defined as the diagonal matrix with eigenvalues corresponding to $\eps = 1$. Initial data are given in the \emph{characteristic} variables $w$ as 
\begin{align}
 w(x, 0) = (\cos(4\pi x), 0, 0)^T
\end{align}
where the first component corresponds to the 'slow' eigenvalue. Note that with these initial conditions, it is guaranteed that there is a limit solution for $\eps \rightarrow 0$. 

Numerical results are shown in Figure \ref{fig:comparisonCharArun}. Those results have been computed with the set of parameters as given in Table \ref{tbl:parameters}. Note that the choice of $\dt$ corresponds to a non-stiff cfl number of $\frac{1.53}{10} = 0.153$ for the characteristic splitting, and approximately $\frac{1.7}{10} = 0.17$ for the linearized splitting.

\begin{table}[!ht]
 \begin{tabular}{|c|c | c| c|c|c|}
  \hline
  $\widehat \alpha$ & $\widetilde \alpha$ & $\dx$ & $\dt$ & $T_{end}$ \\ \hline 
  Maximum absolute eigenvalues of $\widehat A$ & 0 & $1/200$ & $10^{-1} \dx$ & 0.1 \\
  \hline
 \end{tabular}
 \caption{Parameters used for the computations in Figure \ref{fig:comparisonCharArun}.}\label{tbl:parameters}

\end{table}

  \begin{figure}
    \begin{center}
    \begin{tikzpicture}[scale=0.5]
      \begin{axis}[scale  only  axis, axis  y  line=left, y  axis  line  style={-}, axis x  line=none, ylabel={$u$, Linearized Splitting}, title={$\eps=10^{-1}$}]
	\addplot[color=blue, mark=none, line width=2pt]  table[x index=0, y index=1] {LinearizedSplittingLinearizedEuler.out};
      \end{axis}
      \begin{axis}[scale  only  axis, axis  y  line=right, y  axis  line  style={-}, xlabel=$x$, ylabel={$u$, Characteristic splitting}]
	\addplot[color=red, mark=none, line width=2pt]  table[x index=0, y index=1] {CharacteristicSplittingLinearizedEuler.out};
      \end{axis}
    \end{tikzpicture}
    \begin{tikzpicture}[scale=0.5]
      \begin{axis}[scale  only  axis, axis  y  line=left, y  axis  line  style={-}, axis x  line=none, ylabel={$u$, Linearized Splitting}, title={$\eps=10^{-3}$}]
	\addplot[color=blue, mark=none, line width=2pt]  table[x index=0, y index=2] {LinearizedSplittingLinearizedEuler.out};
      \end{axis}
      \begin{axis}[scale  only  axis, axis  y  line=right, y  axis  line  style={-}, xlabel=$x$, ylabel={$u$, Characteristic splitting}]
	\addplot[color=red, mark=none, line width=2pt]  table[x index=0, y index=2] {CharacteristicSplittingLinearizedEuler.out};
      \end{axis}
    \end{tikzpicture}
    \begin{tikzpicture}[scale=0.5]
      \begin{axis}[scale  only  axis, axis  y  line=left, y  axis  line  style={-}, axis x  line=none, ylabel={$u$, Linearized Splitting}, title={$\eps=10^{-5}$}]
	\addplot[color=blue, mark=none, line width=2pt]  table[x index=0, y index=3] {LinearizedSplittingLinearizedEuler.out};
      \end{axis}
      \begin{axis}[scale  only  axis, axis  y  line=right, y  axis  line  style={-}, xlabel=$x$, ylabel={$u$, Characteristic splitting}]
	\addplot[color=red, mark=none, line width=2pt]  table[x index=0, y index=3] {CharacteristicSplittingLinearizedEuler.out};
      \end{axis}
    \end{tikzpicture}
    \begin{tikzpicture}[scale=0.5]
      \begin{axis}[scale  only  axis, axis  y  line=left, y  axis  line  style={-}, axis x  line=none, ylabel={$u$, Linearized Splitting}, title={$\eps=10^{-7}$}]
	\addplot[color=blue, mark=none, line width=2pt]  table[x index=0, y index=4] {LinearizedSplittingLinearizedEuler.out};
      \end{axis}
      \begin{axis}[scale  only  axis, axis  y  line=right, y  axis  line  style={-}, xlabel=$x$, ylabel={$u$, Characteristic splitting}]
	\addplot[color=red, mark=none, line width=2pt]  table[x index=0, y index=4] {CharacteristicSplittingLinearizedEuler.out};
      \end{axis}
    \end{tikzpicture}
            
    \caption{Comparison of classical versus characteristic splitting. Blue: Results based on linearized splitting. Red: Results based on characteristic splitting. Note the different scales in the plot.}\label{fig:comparisonCharArun}
    \end{center}
\end{figure}
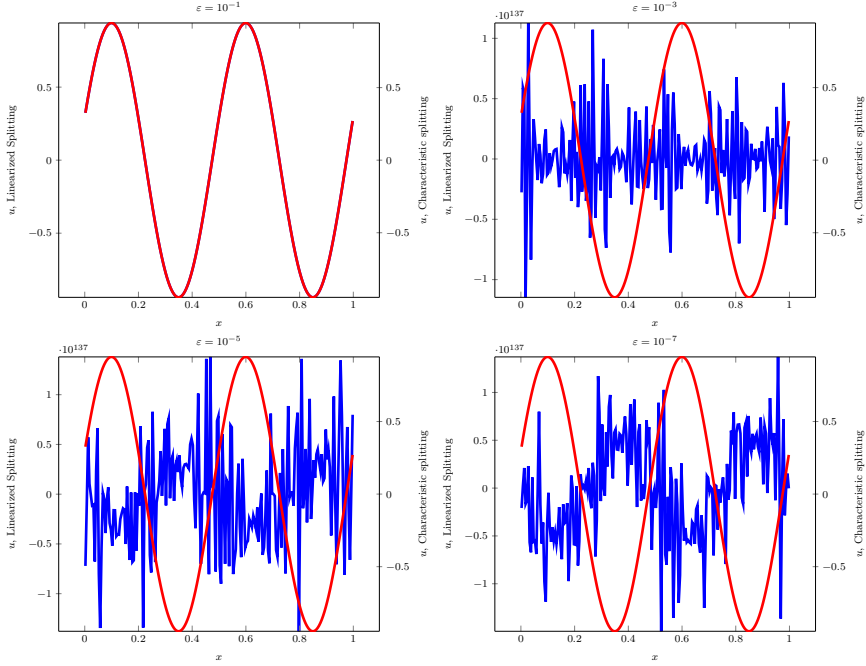

It is clearly visible that the linearized splitting is unstable, at least with the uniform choice of $\dt$ independent of $\eps$, while the characteristic one is not. (Note: The left $y$ axis corresponds to the linearized splitting, and the right one to the characteristic one.)

\section{Conclusions And Outlook}
We developed a technique to investigate the stability and the
largest allowable time steps for low-order IMEX schemes based on a
general class of splittings for linear hyperbolic conservation
laws. 
The eigenvalue analysis reveals the subtle interplay of terms stemming from the discretization of both advection and diffusion, and an additional term stemming from truncation errors in time.

Our analysis, in contrast to common belief, shows that the nonstiff CFL number is usually \emph{not} enough to ensure stability of an IMEX scheme. Indeed, for a splitting introduced earlier, the analysis shows that there is a time step restriction of (at least) order $\eps$ to ensure stability, so the resulting algorithm is not asymptotically stable.

To circumvent this problem, we introduced a new way of obtaining suitable
splittings via characteristic decomposition of the flux Jacobian. Those splittings are stable under a constraint on the nonstiff CFL number that is \emph{independent} of $\eps$. 
We demonstrated that the splitting does influence the stability of
the resulting method, and therefore, one should put effort into
designing suitable flux splittings.

The extension of this analysis to \emph{nonlinear} systems of
conservation laws is not straightforward. However, considering the
linearized equations, the analysis can be easily used as a guiding
principle, similar as the von-Neumann analysis. 
Another challenge is the treatment of multiple dimensions, as the flux-Jacobians usually do not commute. A suitable extension is subject to current research. 

The results presented in this paper concern only the issue of asymptotic stability.
In future work we will extend this study and include the quality of the approximation of low- and higher-order IMEX schemes for compressible flows. 

\bibliographystyle{spmpsci}


\end{document}